\pdfoutput=1
\documentclass[11pt]{article}

\usepackage[a4paper,margin=1in]{geometry}
\usepackage[T1]{fontenc}

\usepackage{amsmath,amssymb,amsfonts,amsthm}

\usepackage{graphicx}
\usepackage{algorithm,algorithmic}
\usepackage{float}
\usepackage{textcomp}
\usepackage{subcaption}

\usepackage{comment}
\usepackage{cite}
\usepackage{hyperref}
\hypersetup{
  hidelinks,
  pdftitle={Weighted Laplacian Flow: A Deterministic Particle Flow with Exponential Convergence},
  pdfauthor={Weiye Gan, Tangjun Wang, Zuoqiang Shi}
}
\usepackage{cleveref}

\newtheorem{theorem}{Theorem}[section]
\newtheorem{lemma}[theorem]{Lemma}
\newtheorem{proposition}[theorem]{Proposition}

\theoremstyle{remark}
\newtheorem{remark}[theorem]{Remark}

\crefname{theorem}{Theorem}{Theorems}
\crefname{proposition}{Proposition}{Propositions}
\crefname{lemma}{Lemma}{Lemmas}
\crefname{remark}{Remark}{Remarks}
\crefname{figure}{Fig.}{Figs.}
\crefname{assumption}{Assumption}{Assumptions}
\crefname{corollary}{Corollary}{Corollaries}

\title{Weighted Laplacian Flow: A Deterministic Particle Flow with Exponential Convergence%
\thanks{This work was supported by National Natural Science Foundation of China (NSFC) 92370125.}}

\author{
Weiye Gan\thanks{Department of Mathematical Sciences, Tsinghua University,
Beijing 100084, China. \texttt{gwy22@mails.tsinghua.edu.cn}}
\and
Tangjun Wang\thanks{Department of Mathematics, The University of Hong Kong,
Pokfulam Road, Hong Kong SAR, China. \texttt{wangtj@hku.hk}}
\and
Zuoqiang Shi\thanks{Yau Mathematical Sciences Center, Tsinghua University,
Beijing 100084, China; and Yanqi Lake Beijing Institute of Mathematical
Sciences and Applications, Beijing 101408, China.
Corresponding author: \texttt{zqshi@tsinghua.edu.cn}}
}

\date{}

\begin{document}

\maketitle

\begin{abstract}
We introduce weighted Laplacian flow (WLF), a deterministic particle-flow framework for sampling from a target distribution known only up to normalization. The proposed method evolves the logarithmic density ratio through a transport equation and determines the particle velocity from a target-weighted Poisson problem, resulting in a nonlocal and kernel-free mechanism for redistributing mass. For bounded smooth domains, we establish global well-posedness of the flow for Lipschitz initial data and prove that the flow contracts the discrepancy between the evolving and target densities at an exact exponential rate in terms of the oscillation of the log-density ratio, as well as in the associated projective metric. The relative entropy also satisfies a precise dissipation relation involving the two directional Kullback--Leibler divergences.
In addition, we identify a variational interpretation of WLF. The method can be viewed as a gradient flow of the reverse Kullback--Leibler divergence under a target-anchored metric. Consequently, the method achieves an explicit relaxation scale without requiring log-concavity or spectral-gap assumptions on the target distribution. Numerical experiments on multimodal, heavy-tailed, and high-dimensional targets demonstrate the effectiveness of the proposed approach in capturing long-range mass transport.
\end{abstract}

\medskip
\noindent\textbf{Keywords.} weighted Laplacian flow, deterministic particle flow, Kullback--Leibler divergence, gradient flow

\noindent\textbf{2020 Mathematics Subject Classification.} 35Q49, 35A01, 35B40, 65C05
\medskip

\section{Introduction}
\label{sec:introduction}

Sampling from a target probability density is a fundamental task in statistics,
machine learning, and scientific computing
\cite{robert2004monte,gelman2013bayesian,durmus2019high,song2019generative}.
In many applications, the target density $p$ is available only up to a
normalizing constant, and the objective is to generate particles whose empirical
distribution approximates $p$. A central question is therefore how to construct
an efficient evolution that transports an accessible initial distribution
$q_0$ toward the target while retaining a tractable analytical structure.

A classical approach is based on stochastic diffusion. The variational
formulation of Jordan, Kinderlehrer and Otto~\cite{jko1998} shows that the
Fokker--Planck equation associated with overdamped Langevin dynamics can be
interpreted as the Wasserstein gradient flow of the relative entropy
$\mathrm{KL}(\cdot\,\|\,p)$
\cite{ambrosio2005gradient,santambrogio2017euclidean}. At the density level,
\begin{equation}
\label{eq:langevin}
    \frac{\partial q_t}{\partial t}
    =
    \nabla\cdot\left(q_t\nabla\log\frac{q_t}{p}\right).
\end{equation}
The corresponding deterministic probability-flow velocity is
\[
    v_{\mathrm{Lan}}
    =
    \nabla\log\frac{p}{q_t},
\]
while the same Fokker--Planck equation can be realized by the Langevin diffusion
\[
    \mathrm{d}X_t
    =
    \nabla\log p(X_t)\,\mathrm{d}t
    +
    \sqrt{2}\,\mathrm{d}W_t .
\]
Its convergence has been studied extensively under assumptions such as
log-concavity or log-Sobolev inequalities
\cite{roberts1996exponential,durmus2017nonasymptotic,dalalyan2017theoretical,wang2020exponential,yang2025non}.
For multimodal or heavy-tailed targets, however, purely diffusive exploration
may suffer from slow barrier crossing and inefficient long-range mass transport.

Deterministic particle methods provide an alternative by transporting particles
through an interacting velocity field. A representative example is Stein
Variational Gradient Descent (SVGD)~\cite{liu2016svgd}, whose mean-field limit is
the Stein variational gradient flow
\cite{liu2017stein,lu2019scaling,duncan2023geometry}
\begin{equation}
\label{eq:SVGD1}
    \frac{\partial q_t}{\partial t}
    =
    \nabla\cdot\left(
    q_t\mathcal{T}_{k,q_t}\nabla\log\frac{q_t}{p}
    \right),
\end{equation}
where
\[
    \mathcal{T}_{k,q_t}f(x)
    =
    \int k(x,y)f(y)q_t(y)\,\mathrm{d}y .
\]
SVGD has been widely used in machine learning and applied mathematics
\cite{liu2017policy,chewi2020svgd,korba2020non,wang2019stein,wang2018stein}.
Its behavior depends on the choice of kernel, and the kernel integral operator
modifies the underlying Wasserstein force. The regularized Stein variational
gradient flow of He et al.~\cite{he2024regularized} further clarifies the
connection between Stein and Wasserstein dynamics.

The present work starts from a different modeling principle. Let $\rho$ be a
target density known only up to a multiplicative constant and set
\[
    \omega(t,x)=\log\frac{\rho(x)}{q_t(x)}.
\]
If particles follow a deterministic velocity field $v$, then the logarithmic
density ratio satisfies
\[
    \frac{\mathrm{d}}{\mathrm{d}t}\omega(t,X_t)
    =
    \frac{1}{\rho}\nabla\cdot(\rho v).
\]
Restricting to $v=\nabla\phi$ and prescribing the centered linear relaxation
\[
    \frac{\mathrm{d}}{\mathrm{d}t}\omega(t,X_t)
    =
    -(\omega(t,X_t)-\bar\omega(t))
\]
leads to the weighted Poisson equation
\[
    \mathcal{L}_\rho\phi
    :=
    \frac1\rho\nabla\cdot(\rho\nabla\phi)
    =
    -(\omega-\bar\omega).
\]
The scalar $\bar\omega(t)$ is determined by the Neumann compatibility
condition. Since $\mathcal{L}_\rho$ is unchanged when $\rho$ is multiplied by a
positive constant, the resulting dynamics are intrinsically
normalization-free.

Besides this direct relaxation interpretation, WLF possesses a
variational structure that distinguishes it from Wasserstein/Langevin dynamics.
For interpretation only, normalize $p=\rho/\int_\Omega\rho$ and write
$\eta=\log(p/q)$. If
\[
    \mathcal D_p v:=\frac1p\nabla\cdot(pv),
    \qquad
    P_p g:=g-\int_\Omega pg\,dx,
\]
then the Neumann compatibility condition gives
\[
    \bar\omega-\log\!\int_\Omega\rho
    =
    \int_\Omega p\eta\,dx
    =
    \mathrm{KL}(p\,\|\,q),
\]
and the WLF equation becomes
\[
    \mathcal D_p v=-P_p\eta.
\]
We show that this is precisely the formal gradient flow of the relative entropy
$\mathrm{KL}(p\,\|\,q)$ under a target-weighted metric that measures
the weighted compression $\mathcal D_pv$, rather than the kinetic velocity
itself. The weighted Poisson equation then has a second variational
interpretation: among all velocity fields producing the optimal compression
rate, its gradient solution has minimum weighted kinetic energy. Thus the
centering term is not merely a solvability correction; it is the orthogonal
projection of the $\mathrm{KL}(p\,\|\,q)$ force onto the zero-mean compression space.

The construction is related to, but different from, several established
geometries on probability measures. Dynamical transport distances with a
reference measure or nonlinear mobility modify the local transport action
\cite{dolbeault2009transport,carrillo2010nonlinear}, while Hessian transport
metrics build an inverse metric operator from the transported inverse Hessian
of an entropy functional \cite{li2019hessian}. WLF has a different
target-anchored operator structure: the current density $q$ determines the
transport kinematics, whereas the fixed target $p$ determines the compression
geometry. This distinction is reflected in the target-anchored operator viewpoint
developed in \Cref{sec:variational-geometry}.

The main contributions of this paper are as follows.
\begin{itemize}
    \item We introduce WLF as a kernel-free deterministic particle flow based on
    controlled relaxation of the logarithmic density ratio and formulate the
    coupled elliptic-transport system \eqref{eq:system}. 

    \item We establish global well-posedness of the WLF PDE system for
    Lipschitz initial log-density ratios on a bounded smooth domain with
    homogeneous Neumann boundary conditions. The solution remains spatially
    Lipschitz on every finite time interval. The proof
    combines a Schauder fixed-point construction at the H\"older level,
    maximum-principle estimates, logarithmic elliptic control and propagation of
    Lipschitz regularity.

    \item We obtain several exact convergence identities. The log-density-ratio
    oscillation contracts exactly as $e^{-t}$. In addition,
    the dissipation rate of $\mathrm{KL}(q_t\,\|\,p)$ is exactly the Jeffreys divergence, $-\mathrm{KL}(q_t\,\|\,p)-\mathrm{KL}(p\,\|\,q_t)$.
    Hence $\mathrm{KL}(q_t\,\|\,p)$ also decays exponentially. Importantly, this convergence result requires no logarithmic Sobolev inequality, Poincaré inequality, convexity, or curvature-type assumption on the target distribution.

    \item We identify a variational structure of WLF. We
    introduce a target-weighted divergence metric on the manifold of positive
    probability densities and show that WLF is the formal gradient flow of
    $\mathrm{KL}(p\,\|\,q)$ in this metric. At equilibrium, the metric
    reduces to the $L^2(p)$ metric on relative density perturbations, and the
    linearized WLF is simply $\partial_t u=-u$.
\end{itemize}

The remainder of the paper is organized as follows. \Cref{sec:settings}
derives the mathematical model of the weighted Laplacian flow. \Cref{sec:well-posedness} states the
global well-posedness result for Lipschitz initial data, and
\Cref{sec:convergence} establishes the exact contraction and entropy identities.
\Cref{sec:variational-geometry} then develops the variational and geometric
structure underlying these dynamics. The detailed existence, regularity, and
stability proofs are given in \Cref{sec:proof-well-posedness}. Numerical
experiments are reported in \Cref{sec:numerical-experiments}, and concluding
remarks are given in \Cref{sec:conclusion}.

\section{Formulation of Weighted Laplacian Flow}
\label{sec:settings}

Let $\Omega\subset\mathbb{R}^d$ be a bounded smooth domain and $\rho:\bar\Omega\to(0,\infty)$ be a smooth strictly positive unnormalized target density (equivalently, a
positive target weight) known only up to a multiplicative constant, and let
$q(t,x)$ be the probability density of particles
transported by
\[
    \frac{dX_t}{dt}=v(t,X_t).
\]
Then $q$ satisfies the continuity equation
\begin{equation}
\partial_t q+\nabla\cdot(qv)=0.
\label{eq:continuity}
\end{equation}
We introduce the logarithmic density ratio
\[
    \omega(t,x)=\log\frac{\rho(x)}{q(t,x)}.
\]

Differentiating $\omega$ along particle trajectories gives
\begin{equation}
\frac{d}{dt}\omega(t,X_t)
=
\frac1{\rho}\nabla\cdot(\rho v)(t,X_t).
\label{eq:omega_transport}
\end{equation}
The main idea is to construct the velocity field such that $\omega$ converges to a constant. One natural approach is to restrict the velocity to a gradient field $v=\nabla\phi$ and prescribe the
centered linear relaxation
\[
    \frac{d}{dt}\omega(t,X_t)=-(\omega(t,X_t)-\bar\omega(t)).
\]
This leads to the weighted Poisson equation
\begin{equation}
\frac1{\rho}\nabla\cdot(\rho\nabla\phi)
=
-(\omega-\bar\omega).
\label{eq:poisson}
\end{equation}
Here $\bar\omega(t)$ is treated as an additional scalar unknown and is
determined together with $\phi$ by the solvability condition for the Neumann
problem,
\begin{equation}
    \int_\Omega \rho(x)\bigl(\omega(t,x)-\bar\omega(t)\bigr)\,dx=0.
\label{eq:bar-solvability}
\end{equation}
Equivalently,
\[
    \bar\omega(t)
    =
    \frac{\int_\Omega \rho(x)\omega(t,x)\,dx}
         {\int_\Omega \rho(x)\,dx}.
\]
This identity is used only analytically: the formulation remains invariant under
multiplication of $\rho$ by an arbitrary positive constant. Thus neither the
model nor its numerical implementation requires the partition function.

Now we can write the complete mathematical model of the proposed weighted Laplacian flow as following:
\begin{equation}
\left\{
\begin{aligned}
&
\frac1{\rho}\nabla\cdot(\rho\nabla\phi)
=
-(\omega-\bar\omega),
\\
&
\frac{d}{dt}\omega(t,X(t,x))
=
-(\omega(t,X(t,x))-\bar\omega(t)),
\\
&
\frac{dX(t,x)}{dt}
=
\nabla\phi(t,X(t,x)),
\\
&
\frac{\partial\phi}{\partial n}=0,\qquad
\text{on } \partial\Omega
\\
&
\int_\Omega \rho(x)\bigl(\omega(t,x)-\bar\omega(t)\bigr)\,dx=0,\quad \int_\Omega\phi(t,x)\,dx=0
\end{aligned}
\right.
\label{eq:system}
\end{equation}

\begin{remark}
\label{remark1}
The homogeneous Neumann boundary condition ensures that the particle velocity
$v=\nabla\phi$ has no normal component at the boundary, so particles remain
confined in $\Omega$. Periodic boundary conditions can be treated similarly.
\end{remark}

\begin{remark}
If $\rho$ is replaced by $c\rho$ with $c>0$, then $\omega$ and
$\bar\omega$ are both shifted by $\log c$, whereas
$\omega-\bar\omega$, $\phi$, and the particle velocity are unchanged. Hence
the flow is intrinsically normalization-free.
\end{remark}

\section{Global Well-Posedness}
\label{sec:well-posedness}

Throughout this section, let $\Omega\subset\mathbb{R}^d$ be a bounded domain
with smooth boundary and let $\rho\in C^\infty(\bar\Omega)$ be strictly
positive.

\begin{theorem}[Global well-posedness for Lipschitz data]
\label{thm:global_wellposedness}
Let
\[
    \omega_0\in C^{0,1}(\bar\Omega).
\]
Then the WLF system \eqref{eq:system} admits a unique global solution. For every
$T<\infty$ and every $\alpha\in(0,1)$,
\[
    \omega\in
    L^\infty([0,T];C^{0,1}(\bar\Omega))
    \cap C([0,T];C^{0,\alpha}(\bar\Omega)),
\]
and
\[
    \phi\in
    L^\infty([0,T];C^{2,\alpha}(\bar\Omega)).
\]
Moreover, $\phi\in C([0,T];C^{2,\beta}(\bar\Omega))$ for every
$\beta\in(0,1)$, and for each characteristic $X(t,x)$ the map
$t\mapsto\omega(t,X(t,x))$ is continuously differentiable and satisfies the
characteristic equation in \eqref{eq:system}.

The spatial Lipschitz regularity is propagated by the flow. More precisely,
\begin{equation}
    [\omega(t)]_{\operatorname{Lip}}
    \le
    e^{-t}
    \exp\left(
        \int_0^t\|D^2\phi(s)\|_{L^\infty}\,ds
    \right)
    [\omega_0]_{\operatorname{Lip}},
    \qquad 0\le t\le T.
\label{eq:lipschitz-propagation-summary}
\end{equation}
Finally, if $(\omega_i,\phi_i)$, $i=1,2$, are two such solutions on $[0,T]$,
then
\begin{equation}
    \|\omega_1(t)-\omega_2(t)\|_{L^2(\rho)}^2
    \le
    e^{C_Tt}
    \|\omega_1(0)-\omega_2(0)\|_{L^2(\rho)}^2,
    \qquad 0\le t\le T,
\label{eq:linear-L2-stability-summary}
\end{equation}
where
$\|u\|_{L^2(\rho)}^2=\int_\Omega\rho|u|^2dx$ and $C_T$ depends only on
finite-time bounds of the two solutions and on $\rho$. In particular, the WLF
dynamics are globally well posed in the locally spatially Lipschitz class.
\end{theorem}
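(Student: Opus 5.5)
The plan is to first reduce the system to a closed Lagrangian problem, then bootstrap estimates. Along characteristics the second equation integrates explicitly: with $\bar\omega$ given, $\omega(t,X(t,x))-\bar\omega(t)$ is not quite closed, but $\omega(t,X(t,x))=e^{-t}\omega_0(x)+\int_0^t e^{-(t-s)}\bar\omega(s)\,ds$. Since the elliptic equation only sees $\omega-\bar\omega$, I will work with $u=\omega-\bar\omega$. Combining the characteristic equation with the continuity equation (conservation of $q=\rho e^{-\omega}$), one checks $\bar\omega'(t)=0$ formally: $\int\rho\,\partial_t\omega=\int \rho(-u-\nabla\phi\cdot\nabla\omega)$, and integrating by parts with the Neumann condition and $\mathcal L_\rho\phi=-u$ gives $\int\rho\nabla\phi\cdot\nabla\omega=\int\rho u\,\omega$-type terms that cancel. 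Either way, $u(t,X(t,x))=e^{-t}u_0(x)$. This gives the maximum principle $\|u(t)\|_{L^\infty}\le e^{-t}\|u_0\|_{L^\infty}$ for free, which is the a priori $L^\infty$ bound.

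\emph{Local existence.} Fix $\alpha\in(0,1)$ and $T_0$ small. On the set $K=\{u\in C([0,T_0];C^{0,\alpha}(\bar\Omega)):\|u\|_{L^\infty}\le\|u_0\|_{L^\infty},\ [u(t)]_{C^{0,\alpha}}\le M\}$, define $\mathcal S(u)$ as follows: solve the Neumann problem $\mathcal L_\rho\phi=-u$, $\int\phi=0$, so Schauder theory gives $\|\phi(t)\|_{C^{2,\alpha}}\le C\|u(t)\|_{C^{0,\alpha}}$; let $X$ be the flow of $\nabla\phi$, which preserves $\bar\Omega$ since $\partial_n\phi=0$ and is $C^{1}$ with $\|DX^{\pm1}\|\le e^{\int\|D^2\phi\|}$; set $\mathcal S(u)(t)=e^{-t}u_0\circ X(t)^{-1}$ (then recentre with respect to $\rho$). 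For small $T_0$ this maps $K$ into itself, $K$ is convex and compact in $C([0,T_0];C^{0,\alpha'})$ for $\alpha'<\alpha$ by Arzel\`a--Ascoli, and $\mathcal S$ is continuous there; Schauder's fixed-point theorem gives a solution. Continuity of $\phi$ in $C^{2,\beta}$ follows from continuity of $u$ in $C^{0,\beta}$ together with the Schauder estimate, and differentiability of $t\mapsto\omega(t,X(t,x))$ is immediate from the explicit formula.

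\emph{Lipschitz propagation.} Differentiating $u(t,X(t,x))=e^{-t}u_0(x)$ gives $\nabla u(t,X)=e^{-t}(DX)^{-T}\nabla u_0$, and Gr\"onwall on $\partial_t DX=D^2\phi(X)DX$ yields \eqref{eq:lipschitz-propagation-summary}. The main obstacle is to exclude blow-up: $\|D^2\phi\|_{L^\infty}$ is not controlled by $\|u\|_{L^\infty}$ alone. Here I would use the logarithmic elliptic estimate
$\|D^2\phi\|_{L^\infty}\le C\|u\|_{L^\infty}\bigl(1+\log^+(\|u\|_{C^{0,1}}/\|u\|_{L^\infty})\bigr)+C\|u\|_{L^\infty}$
(Brezis--Gallou\"et/Beale--Kato--Majda type for the Neumann problem with smooth weight). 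Setting $L(t)=[u(t)]_{\mathrm{Lip}}$ and $A\sim\|u_0\|_{L^\infty}$, this gives $L(t)\le L_0\exp\bigl(\int_0^t CA e^{-s}(1+\log(1+L(s)))\,ds\bigr)$, so $\log(1+L)$ satisfies a linear Gr\"onwall inequality and grows at most exponentially. Hence $L$ stays finite on every $[0,T]$, so the $C^{0,\alpha}$ norm does not blow up and the local solution continues globally.

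\emph{Uniqueness and stability.} For two solutions, I will write the Eulerian form $\partial_t u_i+\nabla\phi_i\cdot\nabla u_i=-u_i$ and subtract. Testing the difference $w=u_1-u_2$ against $\rho w$ and using $\nabla\cdot(\rho\nabla\phi_1)=-\rho u_1$ handles the transport term by $\|u_1\|_{L^\infty}\|w\|^2$. The cross term $\int\rho w\nabla(\phi_1-\phi_2)\cdot\nabla u_2$ is bounded by $[u_2]_{\mathrm{Lip}}\|\nabla(\phi_1-\phi_2)\|_{L^2(\rho)}\|w\|_{L^2(\rho)}\le C\|w\|^2_{L^2(\rho)}$ by the energy estimate for the weighted Neumann problem. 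The constants $\bar\omega_i$ are conserved, so Gr\"onwall yields \eqref{eq:linear-L2-stability-summary}, and uniqueness follows.
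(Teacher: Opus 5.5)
Your reduction rests on the claim $\bar\omega'(t)=0$, and this claim is false. If you carry out the computation you sketch, the Neumann condition and $\mathcal L_\rho\phi=-u$ give
\[
    \frac{d}{dt}\int_\Omega\rho\,\omega\,dx
    =-\int_\Omega\rho\,u\,dx-\int_\Omega\rho\,\nabla\phi\cdot\nabla\omega\,dx
    =\int_\Omega\omega\,\nabla\cdot(\rho\nabla\phi)\,dx
    =-\int_\Omega\rho\,u\,\omega\,dx
    =-\int_\Omega\rho\,u^2\,dx .
\]
The term you expected to cancel, $\int_\Omega\rho u\omega\,dx$, equals $\int_\Omega\rho u^2\,dx$ because $\int_\Omega\rho u\,dx=0$. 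It is strictly positive unless $u\equiv0$. Hence $\bar\omega'(t)=-\langle u(t)^2\rangle_\rho$, where $\langle g\rangle_\rho:=\int_\Omega\rho g\,dx/\int_\Omega\rho\,dx$. This is the paper's law $\frac{d}{dt}\mathrm{KL}(p\,\|\,q_t)=-\operatorname{Var}_p(\log(p/q_t))$ in disguise, since $\bar\omega-\log\int_\Omega\rho\,dx=\mathrm{KL}(p\,\|\,q_t)$. Consequently $u(t,X(t,x))=e^{-t}u_0(x)+\int_0^te^{-(t-s)}\langle u(s)^2\rangle_\rho\,ds$, not $e^{-t}u_0(x)$. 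Your bound $\|u(t)\|_{L^\infty}\le e^{-t}\|u_0\|_{L^\infty}$ therefore fails for $t>0$ whenever $\max u_0=\|u_0\|_{L^\infty}$ and $u_0\not\equiv0$.

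Much of your argument survives, because it really only needs the difference identity $\omega(t,X(t,x))-\omega(t,X(t,y))=e^{-t}(\omega_0(x)-\omega_0(y))$, in which $\bar\omega$ cancels. That identity gives your gradient formula, the Lipschitz propagation bound, and $\|u(t)\|_{L^\infty}\le\operatorname{osc}\omega(t)=e^{-t}\operatorname{osc}\omega_0$. The last bound is enough for your Gr\"onwall argument on $\log(1+L)$. In the local step, the recentring in $\mathcal S$ is exactly what makes the fixed-point equation correct, but three things must change. First, $K$ must use the bound $\|u\|_{L^\infty}\le\operatorname{osc}\omega_0$ to be invariant. Second, $\omega$ must be recovered from $\omega(t,X(t,a))=\omega_0(a)-\int_0^tu(s,X(s,a))\,ds$, not as $u+\bar\omega_0$. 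Third, you need relative compactness of $\mathcal S(K)$ via a time modulus, since $K$ itself is not compact.

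The error does break your uniqueness step as written. The equation $\partial_tu_i+\nabla\phi_i\cdot\nabla u_i=-u_i$ is missing the term $+\langle u_i^2\rangle_\rho$, and ``the constants $\bar\omega_i$ are conserved'' is false. So your energy estimate controls only $w=u_1-u_2$ and does not give the stated $L^2(\rho)$ bound for $\omega_1-\omega_2$. The repair is short:
\begin{itemize}
\item The missing term is constant in space and drops out after testing with $\rho w$, since $\int_\Omega\rho w\,dx=0$. So your estimate for $\|w\|_{L^2(\rho)}$ stands.
\item Then $\frac{d}{dt}(\bar\omega_1-\bar\omega_2)=-\langle w(u_1+u_2)\rangle_\rho$, which is bounded by $C\|w\|_{L^2(\rho)}$.
\item Since $\|\omega_1-\omega_2\|_{L^2(\rho)}^2=\|w\|_{L^2(\rho)}^2+\bigl(\int_\Omega\rho\,dx\bigr)(\bar\omega_1-\bar\omega_2)^2$, Gr\"onwall applied to the sum gives the claim.
\end{itemize}
The paper avoids the issue by writing the Eulerian equation for $\delta=\omega_1-\omega_2$ directly, namely $\partial_t\delta+v_1\cdot\nabla\delta+\nabla\psi\cdot\nabla\omega_2=-(\delta-\bar\delta)$, which contains no $\bar\omega'$.

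Separately, you take the Eulerian form for granted. For merely Lipschitz solutions it holds only almost everywhere and has to be derived from the characteristic formulation. The paper does this via Lipschitz-in-time bounds, Rademacher's theorem, and the bi-Lipschitz spacetime map $(t,a)\mapsto(t,X(t,a))$.
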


The existence argument is first carried out at the H\"older level. Since a
Lipschitz function belongs to $C^{0,\alpha}(\bar\Omega)$ for every
$\alpha\in(0,1)$, this yields a global solution for Lipschitz initial data. A
maximum principle controls the oscillation, while a logarithmic elliptic
estimate obtained from global BMO regularity and a BMO--H\"older interpolation
inequality controls $\|D^2\phi\|_{L^\infty}$ and prevents finite-time blow-up
of the H\"older seminorm. The characteristic formula then propagates spatial Lipschitz
regularity in the form \eqref{eq:lipschitz-propagation-summary}. Finally, a
weighted $L^2(\rho)$ estimate for the Eulerian difference equation, together
with the elliptic $H^1(\rho)$ control of the potential difference, gives
\eqref{eq:linear-L2-stability-summary}, hence uniqueness and continuous
dependence. The detailed proof is given in
\Cref{sec:proof-well-posedness}.

\section{Convergence}
\label{sec:convergence}

The centered relaxation yields an exact contraction of the logarithmic
weights and, consequently, an explicit convergence theory even though the
target is known only through the unnormalized density $\rho$. For this section,
write
\[
    Z_\rho:=\int_\Omega\rho(x)\,dx,
    \qquad
    p:=\frac{\rho}{Z_\rho},
    \qquad
    \eta:=\log\frac{p}{q}.
\]
The normalized density $p$ is used only for the analysis and is not required by
the WLF dynamics.

\begin{theorem}[Exact decay of the log-weight oscillation]
\label{thm:decay}
Let $(\omega,\phi)$ be the unique global solution furnished by \cref{thm:global_wellposedness}. Then
\begin{equation}
    \operatorname{osc}\omega(t)
    :=
    \max_{\bar\Omega}\omega(t,\cdot)-\min_{\bar\Omega}\omega(t,\cdot)
    =
    e^{-t}\operatorname{osc}\omega_0
\label{eq:osc-exact}
\end{equation}
for every $t\ge0$.
\end{theorem}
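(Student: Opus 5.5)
The plan is to solve the characteristic equation in \eqref{eq:system} explicitly and then use that the flow map is a homeomorphism of $\bar\Omega$. By \cref{thm:global_wellposedness}, for each $x\in\bar\Omega$ the map $t\mapsto\omega(t,X(t,x))$ is $C^1$ and satisfies the linear ODE
\[
    \frac{d}{dt}\omega(t,X(t,x))=-\bigl(\omega(t,X(t,x))-\bar\omega(t)\bigr),
\]
where $\bar\omega(t)$ is continuous in $t$ and independent of $x$. Variation of constants then gives
\[
    \omega(t,X(t,x))=e^{-t}\omega_0(x)+\int_0^t e^{-(t-s)}\bar\omega(s)\,ds=:e^{-t}\omega_0(x)+c(t).
\]
The key point is that the additive term $c(t)$ is the same for every characteristic. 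Continuity of $\bar\omega$ follows from $\omega\in C([0,T];C^{0,\alpha})$ and the formula for $\bar\omega$ as a $\rho$-weighted average.

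Next I would show that $X(t,\cdot):\bar\Omega\to\bar\Omega$ is a bijection for each $t$. The velocity $\nabla\phi(t,\cdot)$ belongs to $C^{1,\alpha}(\bar\Omega)$ uniformly on $[0,T]$, so it is Lipschitz in $x$ and continuous in $t$, since $\phi\in C([0,T];C^{2,\beta})$. Its normal component vanishes on $\partial\Omega$ (\Cref{remark1}). Extending $\nabla\phi$ to a Lipschitz field on $\mathbb{R}^d$, standard ODE theory gives unique forward and backward trajectories. Tangency at the boundary keeps trajectories starting in $\bar\Omega$ inside $\bar\Omega$, for instance via the Nagumo/Bony--Brezis invariance criterion or by a Gronwall estimate on the distance to the boundary near the smooth $\partial\Omega$. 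The same argument applied to the time-reversed field shows that every $y\in\bar\Omega$ is $X(t,x)$ for some $x\in\bar\Omega$. Hence $X(t,\cdot)$ is onto, and indeed a homeomorphism.

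With surjectivity in hand,
\[
    \max_{\bar\Omega}\omega(t,\cdot)=\max_{x\in\bar\Omega}\omega(t,X(t,x))=e^{-t}\max\omega_0+c(t),
\]
and likewise for the minimum, because $e^{-t}>0$ preserves order. Subtracting the two identities cancels $c(t)$ and yields \eqref{eq:osc-exact}. The maxima exist because $\omega(t)$ is continuous on compact $\bar\Omega$.

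The main obstacle is the invariance and surjectivity of the flow on $\bar\Omega$ under only the Neumann condition $\partial_n\phi=0$. I would handle it with the signed distance function $d(x)$, which is smooth near $\partial\Omega$. Along a trajectory, $\frac{d}{dt}d(X)=\nabla d\cdot\nabla\phi$. This quantity vanishes on $\partial\Omega$ and is Lipschitz, hence bounded by $C|d(X)|$, and Gronwall then prevents crossing the boundary in either time direction. Everything else in the argument is elementary.
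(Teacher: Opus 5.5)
Your proof is correct and follows the paper's route. The paper subtracts the characteristic equations at two starting points to get $\omega(t,X(t,x))-\omega(t,X(t,y))=e^{-t}(\omega_0(x)-\omega_0(y))$, which is your observation that $c(t)$ is common to all characteristics, and then uses that the flow is a bijection of $\bar\Omega$. The paper only asserts that bijectivity, so your signed-distance/Gronwall argument for invariance and surjectivity supplies a detail it leaves implicit.
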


\begin{proof}
For any $x,y\in\bar\Omega$, subtracting the two characteristic equations gives
\[
\frac{d}{dt}
\left[
\omega(t,X(t,x))-\omega(t,X(t,y))
\right]
=
-
\left[
\omega(t,X(t,x))-\omega(t,X(t,y))
\right].
\]
Hence
\[
\omega(t,X(t,x))-\omega(t,X(t,y))
=
e^{-t}\bigl(\omega_0(x)-\omega_0(y)\bigr).
\]
The characteristic flow is a bijection of $\bar\Omega$ onto itself, so taking
the maximum over $x,y$ yields \eqref{eq:osc-exact}.
\end{proof}

\begin{theorem}[Convergence to the target density]
\label{thm:density-convergence}
Let $q_0$ be a strictly positive probability density such that
\[
    \omega_0:=\log\rho-\log q_0\in C^{0,1}(\bar\Omega),
\]
and let $(\omega,\phi)$ be the corresponding unique global WLF solution from
\cref{thm:global_wellposedness}. Let $q_t$ be transported by
$v=\nabla\phi$, equivalently by \eqref{eq:continuity}. Then, with
\[
    \eta(t,x):=\omega(t,x)-\log Z_\rho
    =\log\frac{p(x)}{q_t(x)},
\]
we have
\begin{equation}
    \|\eta(t,\cdot)\|_{L^\infty(\Omega)}
    \le e^{-t}\operatorname{osc}\omega_0.
\label{eq:eta-exp-bound}
\end{equation}
Consequently,
\[
    \|q_t-p\|_{L^\infty(\Omega)}\longrightarrow0,
    \qquad
    \mathrm{KL}(q_t\,\|\,p)
    \le e^{-t}\operatorname{osc}\omega_0.
\]
\end{theorem}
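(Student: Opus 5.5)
The plan is to obtain \eqref{eq:eta-exp-bound} from \cref{thm:decay} together with the fact that $q_t$ and $p$ both have unit mass. The first step is to check that the $q_t$ defined through $\omega$, namely $q_t=\rho e^{-\omega(t)}$, coincides with the density transported by $v=\nabla\phi$. By \eqref{eq:omega_transport}, the continuity equation \eqref{eq:continuity} is equivalent along characteristics to the evolution of $\omega$. The Neumann condition in \eqref{eq:system} makes the flow mass-preserving, so $\int_\Omega q_t\,dx=\int_\Omega q_0\,dx=1$ for all $t$.

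The key step uses the identity $\int_\Omega p\,e^{-\eta(t)}\,dx=\int_\Omega q_t\,dx=1$. Since $p$ is a probability density and $e^{-\eta}$ is continuous, this identity rules out $\eta(t)$ being strictly positive everywhere, and likewise strictly negative everywhere. Hence
\[
\min_{\bar\Omega}\eta(t)\le 0\le\max_{\bar\Omega}\eta(t).
\]
It follows that $\|\eta(t)\|_{L^\infty}\le\max\eta(t)-\min\eta(t)=\operatorname{osc}\omega(t)$, because $\eta$ differs from $\omega$ only by the constant $\log Z_\rho$. \Cref{thm:decay} gives $\operatorname{osc}\omega(t)=e^{-t}\operatorname{osc}\omega_0$, which proves \eqref{eq:eta-exp-bound}.

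The consequences then follow directly. For the uniform convergence, write $q_t-p=p\,(e^{-\eta}-1)$, so that
\[
\|q_t-p\|_{L^\infty}\le\|p\|_{L^\infty}\bigl(e^{\|\eta\|_{L^\infty}}-1\bigr)\to0.
\]
For the entropy bound,
\[
\mathrm{KL}(q_t\,\|\,p)=\int_\Omega q_t\log\frac{q_t}{p}\,dx=-\int_\Omega q_t\,\eta\,dx\le\|\eta\|_{L^\infty}\int_\Omega q_t\,dx=\|\eta\|_{L^\infty}.
\]
Combining this with \eqref{eq:eta-exp-bound} gives $\mathrm{KL}(q_t\,\|\,p)\le e^{-t}\operatorname{osc}\omega_0$.

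The main obstacle is the justification in the first step. The solution of \cref{thm:global_wellposedness} is only Lipschitz in $x$, so the equivalence between the Lagrangian equation for $\omega$ and the Eulerian continuity equation must be argued through the characteristic flow rather than by differentiating classically. I would use the flow map $X(t,\cdot)$, which is a $C^1$ diffeomorphism of $\bar\Omega$ because $\phi\in C^{2,\alpha}$ and $\partial_n\phi=0$. Its Jacobian $J$ satisfies Liouville's formula $\dot J=(\Delta\phi)J$. Then $q_t(X)J$ is constant in time, and $\rho(X)e^{-\omega(t,X)}J$ satisfies the same ODE: differentiating its logarithm gives $\nabla\log\rho\cdot\nabla\phi+\mathcal L_\rho\phi-\Delta\phi=0$. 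These two quantities agree at $t=0$, which identifies the two definitions of $q_t$ and simultaneously gives conservation of mass by the change of variables $x\mapsto X(t,x)$.
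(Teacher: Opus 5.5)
Your proposal is correct and follows the paper's proof essentially step for step: unit mass of $q_t$ gives $\int_\Omega p\,e^{-\eta}\,dx=1$, hence $\min\eta\le0\le\max\eta$ and $\|\eta(t)\|_{L^\infty}\le\operatorname{osc}\omega(t)$, after which \cref{thm:decay}, the identity $q_t-p=p(e^{-\eta}-1)$, and $\mathrm{KL}(q_t\,\|\,p)=-\int_\Omega q_t\eta\,dx\le\|\eta\|_{L^\infty}$ finish the argument. Your Liouville-formula check that $\rho e^{-\omega}$ is the transported density is a useful detail the paper takes for granted, but fix the sign there: the logarithmic derivative of $\rho(X)e^{-\omega(t,X)}J$ is $\nabla\log\rho\cdot\nabla\phi-\mathcal L_\rho\phi+\Delta\phi$, which vanishes because $\mathcal L_\rho\phi=\Delta\phi+\nabla\log\rho\cdot\nabla\phi$; the expression as you wrote it equals $2\nabla\log\rho\cdot\nabla\phi$ instead.
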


\begin{proof}
The Neumann boundary condition implies conservation of mass, so
$\int_\Omega q_t\,dx=1$. Since
$q_t=\rho e^{-\omega}=p e^{-\eta}$,
\[
    \int_\Omega p(x)e^{-\eta(t,x)}\,dx=1.
\]
Therefore $\min\eta(t,\cdot)\le0\le\max\eta(t,\cdot)$, and hence
\[
    \|\eta(t,\cdot)\|_{L^\infty}
    \le\operatorname{osc}\eta(t)
    =\operatorname{osc}\omega(t).
\]
Applying \cref{thm:decay} proves \eqref{eq:eta-exp-bound}. The
$L^\infty$ convergence follows from
\[
    q_t-p=p(e^{-\eta}-1),
\]
and the relative entropy satisfies
\[
    0\le \mathrm{KL}(q_t\,\|\,p)
    =-\int_\Omega q_t\eta\,dx
    \le \|\eta\|_{L^\infty}.
\]
\end{proof}

The reverse relative entropy $\mathrm{KL}(p\,\|\,q_t)$ will be connected to
the variational structure in \Cref{sec:variational-geometry}. Here we record a
complementary exact dissipation law for the opposite relative entropy
$\mathrm{KL}(q_t\,\|\,p)$.

\begin{theorem}[Exact entropy dissipation for $\mathrm{KL}(q_t\,\|\,p)$]
\label{thm:entropy-linear}
Under the assumptions of \cref{thm:density-convergence},
\begin{equation}
\label{eq:entropy-dissipation-linear}
    \frac{d}{dt}\mathrm{KL}(q_t\,\|\,p)
    =
    -\mathrm{KL}(q_t\,\|\,p)
    -\mathrm{KL}(p\,\|\,q_t).
\end{equation}
Consequently,
\begin{equation}
\label{eq:entropy-exp-linear}
    \mathrm{KL}(q_t\,\|\,p)
    \le e^{-t}\mathrm{KL}(q_0\,\|\,p),
\end{equation}
and for every $T>0$,
\begin{equation}
\label{eq:entropy-integrated-linear}
    \int_0^T
    \left[
        \mathrm{KL}(q_t\,\|\,p)+\mathrm{KL}(p\,\|\,q_t)
    \right]dt
    =
    \mathrm{KL}(q_0\,\|\,p)-\mathrm{KL}(q_T\,\|\,p).
\end{equation}
Thus the exact entropy dissipation is the Jeffreys divergence.
\end{theorem}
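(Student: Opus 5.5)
The plan is to compute $\frac{d}{dt}\mathrm{KL}(q_t\,\|\,p)$ in Lagrangian coordinates, so that the characteristic equation of \eqref{eq:system} enters directly. Write $\mathrm{KL}(q_t\,\|\,p)=-\int_\Omega q_t\eta\,dx$, where $\eta=\omega-\log Z_\rho$. The characteristic map $X(t,\cdot)$ pushes $q_0$ forward to $q_t$ and is a bijection of $\bar\Omega$. Hence
\[
\mathrm{KL}(q_t\,\|\,p)=-\int_\Omega q_0(x)\,\eta(t,X(t,x))\,dx .
\]
By \cref{thm:global_wellposedness}, $t\mapsto\omega(t,X(t,x))$ is $C^1$ with
\[
\frac{d}{dt}\,\eta(t,X(t,x))=-\bigl(\omega(t,X(t,x))-\bar\omega(t)\bigr).
\]
Its derivative is bounded uniformly in $x$ on $[0,T]$, since $\omega$ is bounded by the oscillation bound of \cref{thm:decay}. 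So we may differentiate under the integral and obtain
\[
\frac{d}{dt}\mathrm{KL}(q_t\,\|\,p)=\int_\Omega q_t(\omega-\bar\omega)\,dx=\int_\Omega q_t\eta\,dx-\bigl(\bar\omega-\log Z_\rho\bigr),
\]
using $\int_\Omega q_t\,dx=1$ (mass conservation under the Neumann condition).

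The key identification is $\bar\omega-\log Z_\rho=\int_\Omega p\eta\,dx=\mathrm{KL}(p\,\|\,q_t)$. This follows from the compatibility condition \eqref{eq:bar-solvability}: dividing by $Z_\rho$ gives $\bar\omega=\int_\Omega p\,\omega\,dx$, and subtracting $\log Z_\rho$ gives the claim. Together with $\int_\Omega q_t\eta\,dx=-\mathrm{KL}(q_t\,\|\,p)$, this yields \eqref{eq:entropy-dissipation-linear}.

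For the consequences, $\mathrm{KL}(p\,\|\,q_t)\ge0$ gives $\frac{d}{dt}\mathrm{KL}(q_t\,\|\,p)\le-\mathrm{KL}(q_t\,\|\,p)$, and Gronwall gives \eqref{eq:entropy-exp-linear}. Integrating \eqref{eq:entropy-dissipation-linear} over $[0,T]$ gives \eqref{eq:entropy-integrated-linear}; the integrand is continuous in $t$ because $\omega\in C([0,T];C^{0,\alpha})$.

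The main obstacle is justifying the differentiation. The Lagrangian route needs only the $C^1$ characteristic property and uniform bounds, both provided by \cref{thm:global_wellposedness}. If needed, one can also check continuity of $t\mapsto\bar\omega(t)$ from $\omega\in C([0,T];C^{0,\alpha})$. This avoids differentiating $q_t$ in the Eulerian sense, which would require more regularity of $q_t$ than the Lipschitz class provides.
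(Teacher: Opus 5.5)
Your proposal is correct and follows the paper's proof. Like the paper, you write $\mathrm{KL}(q_t\,\|\,p)=-\int_\Omega q_t\eta\,dx$, pull back via $q_t=X(t,\cdot)_\#q_0$, differentiate along characteristics, identify $\bar\omega-\log Z_\rho=\int_\Omega p\eta\,dx=\mathrm{KL}(p\,\|\,q_t)$ from the compatibility condition, and finish with Gronwall and time integration. Your justification for differentiating under the integral sign (the $C^1$ characteristic property together with the uniform bound $|\omega-\bar\omega|\le\operatorname{osc}\omega(t)$) fills in a step the paper leaves implicit.
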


\begin{proof}
Let $\eta=\omega-\log Z_\rho$ and
$\bar\eta=\bar\omega-\log Z_\rho$. Along characteristics,
\[
    \frac{d}{dt}\eta(t,X(t,x))
    =-(\eta(t,X(t,x))-\bar\eta(t)).
\]
By the Neumann compatibility condition,
\[
    \bar\omega(t)=\int_\Omega p\,\omega(t,x)\,dx,
\]
and therefore
\[
    \bar\eta(t)
    :=
    \bar\omega(t)-\log Z_\rho
    =
    \int_\Omega p\,\eta(t,x)\,dx
    =
    \mathrm{KL}(p\,\|\,q_t).
\]
Let
\[
    H(t)=\mathrm{KL}(q_t\,\|\,p)
    =-\int_\Omega q_t\eta\,dx.
\]
Using the pushforward relation $q_t=X(t,\cdot)_\#q_0$ and differentiating along
characteristics,
\[
    H'(t)
    =
    \int_\Omega q_t\eta\,dx-\bar\eta(t)
    =
    -\mathrm{KL}(q_t\,\|\,p)
    -\mathrm{KL}(p\,\|\,q_t).
\]
which proves \eqref{eq:entropy-dissipation-linear}. The remaining conclusions
follow from Gronwall's inequality and integration in time.
\end{proof}

\section{Variational and Geometric Structure}
\label{sec:variational-geometry}

Beyond the exact contraction and entropy identities, the WLF system also
arises from a variational principle on the space of probability densities. 

Let
\begin{equation}
    Z_\rho:=\int_\Omega\rho(x)\,dx,
    \qquad
    p:=\frac{\rho}{Z_\rho},
    \qquad
    \eta:=\log\frac{p}{q}.
\label{eq:geometry-normalization}
\end{equation}
Since $\omega=\eta+\log Z_\rho$, define
\begin{equation}
    \mathcal D_p v
    :=
    \frac1p\nabla\cdot(pv),
    \qquad
    \mathcal L_p\phi
    :=
    \mathcal D_p\nabla\phi,
    \qquad
    P_p g
    :=
    g-\int_\Omega p g\,dx.
\label{eq:weighted-divergence-projection}
\end{equation}

Define the density
space,
\[
    \mathcal P_+(\Omega)
    :=
    \left\{
        q\in C^\infty(\bar\Omega):
        q>0,\ \int_\Omega q\,dx=1
    \right\}.
\]
For $q\in\mathcal P_+(\Omega)$, a tangent vector $\sigma$ satisfies
$\int_\Omega\sigma\,dx=0$. Let $\psi_\sigma$ solve
\begin{equation}
    \sigma=-\nabla\cdot(q\nabla\psi_\sigma),
    \qquad
    \partial_n\psi_\sigma=0,
    \qquad
    \int_\Omega\psi_\sigma\,dx=0.
\label{eq:tangent-potential}
\end{equation}
We define the target-weighted divergence metric
\begin{equation}
    G_q^{\mathrm{WLF}}(\sigma_1,\sigma_2)
    :=
    \int_\Omega
    p\,
    (\mathcal L_p\psi_{\sigma_1})
    (\mathcal L_p\psi_{\sigma_2})\,dx.
\label{eq:wlf-metric}
\end{equation}


\begin{proposition}[Gradient-flow structure]
\label{prop:reverse-kl-gradient-flow}
Let
\[
    \mathcal E(q):=\mathrm{KL}(p\,\|\,q).
\]
For smooth positive densities, WLF is the formal gradient flow
\begin{equation}
    \partial_t q
    =
    -\operatorname{grad}_{G^{\mathrm{WLF}}}\mathcal E(q).
\label{eq:wlf-gradient-flow}
\end{equation}
Along every smooth WLF solution,
\begin{equation}
    \frac{d}{dt}\mathrm{KL}(p\,\|\,q_t)
    =
    -
    \int_\Omega
    p\left|
        P_p\log\frac{p}{q_t}
    \right|^2dx
    =
    -
    \operatorname{Var}_p\!\left(\log\frac{p}{q_t}\right).
\label{eq:reverse-kl-dissipation}
\end{equation}
where
\[
    \operatorname{Var}_p(g)
    :=
    \int_\Omega p
    \left|g-\int_\Omega pg\,dx\right|^2dx.
\]
Equivalently, for every $T>0$,
\begin{equation}
    \mathrm{KL}(p\,\|\,q_T)
    +
    \int_0^T
    \operatorname{Var}_p\!\left(\log\frac{p}{q_t}\right)\,dt
    =
    \mathrm{KL}(p\,\|\,q_0).
\label{eq:reverse-kl-energy-identity}
\end{equation}
\end{proposition}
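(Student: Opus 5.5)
Our plan is to compute the metric gradient explicitly by identifying the Riesz representative of the first variation of $\mathcal E$, and then verify the dissipation identity by direct differentiation.

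\textbf{Step 1: first variation.} For a tangent vector $\sigma$ with $\int_\Omega\sigma\,dx=0$, we have $\mathcal E(q)=\int_\Omega p\log p-\int_\Omega p\log q$. Hence
\[
    d\mathcal E(q)[\sigma]=-\int_\Omega \frac{p}{q}\sigma\,dx .
\]
Write $\sigma=-\nabla\cdot(q\nabla\psi_\sigma)$ and integrate by parts using $\partial_n\psi_\sigma=0$:
\[
    d\mathcal E(q)[\sigma]=-\int_\Omega q\nabla\tfrac{p}{q}\cdot\nabla\psi_\sigma\,dx
    =\int_\Omega p\,\nabla\eta\cdot\nabla\psi_\sigma\,dx,
\]
since $q\nabla(p/q)=p\nabla\log(p/q)\cdot$ up to sign. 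Precisely, $\nabla(p/q)=(p/q)\nabla\eta$, so the sign is $-\int p\nabla\eta\cdot\nabla\psi_\sigma$. A second integration by parts against the weight $p$ gives
\[
    d\mathcal E(q)[\sigma]=\int_\Omega p\,\eta\,\mathcal L_p\psi_\sigma\,dx=\int_\Omega p\,(P_p\eta)\,\mathcal L_p\psi_\sigma\,dx .
\]
Note that $\int_\Omega p\,\mathcal L_p\psi\,dx=0$ by the Neumann condition, so the centering is free. We will track signs carefully here, because this is where errors are likely.

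\textbf{Step 2: Riesz representative.} We seek $\tau=\operatorname{grad}\mathcal E$ with $G^{\mathrm{WLF}}_q(\tau,\sigma)=\int p\,\mathcal L_p\psi_\tau\,\mathcal L_p\psi_\sigma=d\mathcal E[\sigma]$ for all $\sigma$. The map $\sigma\mapsto\mathcal L_p\psi_\sigma$ is onto the $p$-mean-zero functions: given any $g$ with $\int pg=0$, solve $\mathcal L_p\psi=g$ with Neumann data, which is possible by the solvability condition, and set $\sigma=-\nabla\cdot(q\nabla\psi)$. This surjectivity also gives nondegeneracy of $G$. It therefore suffices to take $\mathcal L_p\psi_\tau=\pm P_p\eta$, with the sign fixed by Step 1.

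The gradient flow $\partial_tq=-\tau=\nabla\cdot(q\nabla\psi_\tau)$ is then the continuity equation with velocity $v=-\nabla\psi_\tau$, so $\mathcal D_pv=\mp P_p\eta$. Comparing with the WLF equation $\mathcal L_p\phi=-(\omega-\bar\omega)=-P_p\eta$, where $\bar\omega-\log Z_\rho=\int p\eta$ as in the proof of \cref{thm:entropy-linear}, we must verify that the signs match. Nondegeneracy of $G$ was established above, and we do not need to address the uniqueness of $\phi$ beyond the normalization $\int\phi=0$.

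\textbf{Step 3: dissipation.} Along a smooth solution, apply Step 1 with $\sigma=\partial_tq$, i.e.\ $\psi_\sigma=-\phi$. This gives
\[
    \frac{d}{dt}\mathcal E=-\int_\Omega p\,P_p\eta\,\mathcal L_p\phi\,dx=-\int_\Omega p\,|P_p\eta|^2\,dx=-\operatorname{Var}_p(\eta),
\]
up to confirming the sign. As a cross-check, there is a direct route: $\mathcal E=\int p\eta$, and $\partial_t\eta=-\partial_t\log q=\nabla\cdot(q\nabla\phi)/q$. Alternatively, use the Lagrangian form $\partial_t\eta+\nabla\phi\cdot\nabla\eta=-P_p\eta$ and integrate against $p$, noting that $\int p\nabla\phi\cdot\nabla\eta=-\int p\eta\,\mathcal L_p\phi=\int p\eta P_p\eta$. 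This yields
\[
    \frac{d}{dt}\int p\eta=-\int p\,\eta\,P_p\eta+\int p\,P_p\eta-\int p\,\eta P_p\eta\ldots
\]
We will do this computation cleanly; it should yield exactly $-\operatorname{Var}_p$. Integrating in time then gives \eqref{eq:reverse-kl-energy-identity}.

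The main obstacle is bookkeeping of signs and boundary terms. For merely Lipschitz solutions one would also need to justify the differentiation, but the statement restricts to smooth solutions, so we stay formal.
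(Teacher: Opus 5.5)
Your route is the same as the paper's. You compute the first variation $D\mathcal E(q)[\sigma]=\int_\Omega p\,(P_p\eta)\,\mathcal L_p\psi_\sigma\,dx$, take the Riesz representative $\tau=-\nabla\cdot(q\nabla u)$ with $\mathcal L_pu=P_p\eta$, identify $\phi=-u$, and get the dissipation $-G_q^{\mathrm{WLF}}(\tau,\tau)$. The problem is that the proposition is entirely sign-sensitive: with the opposite sign, WLF would be gradient \emph{ascent} of $\mathrm{KL}(p\,\|\,q)$. Your proposal never settles the sign. Step 2 is left with $\pm$, and Step 3 reaches the correct identity only through two errors that cancel.

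\textbf{Step 2.} Your corrected Step 1 already fixes the sign here. Matching $\int p\,\mathcal L_p\psi_\tau\,\mathcal L_p\psi_\sigma$ with $\int p\,(P_p\eta)\,\mathcal L_p\psi_\sigma$ for all $\sigma$ forces $\mathcal L_p\psi_\tau=+P_p\eta$. The gradient-flow velocity $v=-\nabla\psi_\tau$ then satisfies $\mathcal D_pv=-P_p\eta$. This is exactly the WLF equation $\mathcal L_p\phi=-(\omega-\bar\omega)=-P_p\eta$, so $\phi=-\psi_\tau$, with both normalized to have zero mean.

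\textbf{Step 3.} Two equalities here are wrong.
\begin{itemize}
\item Since $\partial_tq=-\nabla\cdot(q\nabla\phi)$, the potential of $\sigma=\partial_tq$ is $\psi_{\partial_tq}=+\phi$, not $-\phi$.
\item With $\mathcal L_p\phi=-P_p\eta$, your equality $-\int p\,P_p\eta\,\mathcal L_p\phi\,dx=-\int p\,|P_p\eta|^2\,dx$ is false: the left side equals $+\operatorname{Var}_p(\eta)$.
\end{itemize}
The correct line is
\[
\frac{d}{dt}\mathcal E(q_t)=\int_\Omega p\,(P_p\eta)\,\mathcal L_p\phi\,dx=-\int_\Omega p\,|P_p\eta|^2\,dx .
\]

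\textbf{Cross-check.} Your Eulerian cross-check double counts the term $-\int p\,\eta\,P_p\eta$. Done cleanly, it reads
\[
\frac{d}{dt}\int_\Omega p\eta\,dx=-\int_\Omega p\,\nabla\phi\cdot\nabla\eta\,dx-\int_\Omega p\,P_p\eta\,dx=-\int_\Omega p\,\eta\,P_p\eta\,dx=-\operatorname{Var}_p(\eta).
\]

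\textbf{Nondegeneracy.} Nondegeneracy of $G_q^{\mathrm{WLF}}$ comes from injectivity of $\sigma\mapsto\mathcal L_p\psi_\sigma$, not from surjectivity. A normalized Neumann solution of $\mathcal L_p\psi=0$ is constant, hence zero, so $\sigma=0$. Surjectivity only gives existence of the representative.
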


\begin{proof}
Let $\sigma=-\nabla\cdot(q\nabla\psi)$ be an arbitrary tangent vector as in
\eqref{eq:tangent-potential}. The first variation of the $\mathrm{KL}(p\,\|\,q)$ is
\begin{align*}
    D\mathcal E(q)[\sigma]
    &=
    -\int_\Omega p\,\frac{\sigma}{q}\,dx\\
    &=
    \int_\Omega\frac{p}{q}
        \nabla\cdot(q\nabla\psi)\,dx\\
    &=
    -\int_\Omega p
        \nabla\log\frac{p}{q}\cdot\nabla\psi\,dx\\
    &=
    \int_\Omega p
        \log\frac{p}{q}\,
        \mathcal L_p\psi\,dx.
\end{align*}
Because
\[
    \int_\Omega p\,\mathcal L_p\psi\,dx=0,
\]
the last expression equals
\[
    \int_\Omega p
        P_p\!\left(\log\frac{p}{q}\right)
        \mathcal L_p\psi\,dx.
\]
Let $u$ be the normalized Neumann solution of
\[
    \mathcal L_pu
    =
    P_p\!\left(\log\frac{p}{q}\right).
\]
Then, by \eqref{eq:wlf-metric},
\[
    D\mathcal E(q)[\sigma]
    =
    G_q^{\mathrm{WLF}}
    \bigl(-\nabla\cdot(q\nabla u),\sigma\bigr).
\]
Thus
\[
    \operatorname{grad}_{G^{\mathrm{WLF}}}\mathcal E(q)
    =
    -\nabla\cdot(q\nabla u).
\]
The WLF potential satisfies
$\mathcal L_p\phi=-P_p\log(p/q)$, so $\phi=-u$ up to the fixed additive
normalization. Hence the continuity equation
$\partial_tq=-\nabla\cdot(q\nabla\phi)$ is precisely
\eqref{eq:wlf-gradient-flow}.

Finally,
\[
\begin{aligned}
    \frac{d}{dt}\mathcal E(q_t)
    &=
    -G_{q_t}^{\mathrm{WLF}}
    \left(
        \operatorname{grad}\mathcal E,
        \operatorname{grad}\mathcal E
    \right)\\
    &=
    -\int_\Omega
    p\left|
        P_p\log\frac{p}{q_t}
    \right|^2dx,
\end{aligned}
\]
which proves \eqref{eq:reverse-kl-dissipation}. Integration in time gives
\eqref{eq:reverse-kl-energy-identity}.
\end{proof}

\begin{remark}[Comparison with Wasserstein/Langevin geometry]
\label{rem:wasserstein-comparison}
The overdamped Langevin equation is the Wasserstein gradient flow of the
forward divergence $\mathrm{KL}(q\,\|\,p)$, and its deterministic
probability-flow velocity is
\[
    v_{\mathrm{Lan}}=\nabla\eta.
\]
By contrast, if $A_p:=-\mathcal L_p$ on the $p$-mean-zero subspace, then
\[
    v_{\mathrm{WLF}}
    =
    \nabla A_p^{-1}P_p\eta.
\]
Thus WLF replaces the local score force by an inverse weighted-elliptic
preconditioning before taking the spatial gradient. This emphasizes the global
elliptic character of the WLF transport.
\end{remark}

\begin{remark}[Local geometry at equilibrium]
\label{rem:local-wlf-geometry}
The WLF metric has a particularly simple form at $q=p$. If a tangent vector is
written as $\sigma=p\xi$ with $\int_\Omega p\xi\,dx=0$, then
\eqref{eq:tangent-potential} gives
$\xi=-\mathcal L_p\psi_\sigma$, and hence
\[
    G_p^{\mathrm{WLF}}(\sigma,\sigma)
    =
    \int_\Omega p\xi^2\,dx.
\]
On the other hand,
\[
    \mathrm{KL}\bigl(p\,\|\,p(1+\varepsilon\xi)\bigr)
    =
    \frac{\varepsilon^2}{2}
    \int_\Omega p\xi^2\,dx
    +
    O(\varepsilon^3).
\]
Thus the metric tensor agrees with the Hessian of the $\mathrm{KL}(p\,\|\,q)$ at equilibrium.
Equivalently, if
$q=p(1+\varepsilon u)$ with $\int_\Omega pu\,dx=0$, then the linearized WLF is
\[
    \partial_tu=-u.
\]
The unit relaxation rate therefore arises directly from the local variational
geometry, rather than from a spectral-gap estimate for $\mathcal L_p$.
\end{remark}

\section{Proof of Global Well-Posedness}
\label{sec:proof-well-posedness}

This section supplies the detailed proof of \cref{thm:global_wellposedness}.
We first construct a global solution at the H\"older level. Since Lipschitz
initial data belong to $C^{0,\alpha}(\bar\Omega)$ for every
$\alpha\in(0,1)$, this provides existence in the class required by
\cref{thm:global_wellposedness}. We then propagate spatial Lipschitz
regularity and close the weighted $L^2(\rho)$ stability estimate, which gives
uniqueness and continuous dependence.

\subsection{Local existence by fixed point theorem}
\label{subsec:local-existence-proof}

We first make the fixed-point construction precise. Let
$0<\alpha<1$ and let $T>0$. For $M>0$ set
\begin{equation}
\label{eq:S}
\mathcal S_M
:=
\left\{
\begin{array}{l|l}
\phi\in C([0,T];C(\bar\Omega))
&
\displaystyle
\sup_{t\in[0,T]}\|\phi(t)\|_{C^{2,\alpha}(\bar\Omega)}\le M,
\\[-1mm]
&
\partial_n\phi(t)=0,\quad
\displaystyle\int_\Omega\phi(t,x)\,dx=0
\end{array}
\right\}.
\end{equation}
The set $\mathcal S_M$ is convex and closed in
$C([0,T];C(\bar\Omega))$. Notice also that the uniform
$C^{2,\alpha}$ bound and interpolation imply
\[
    \mathcal S_M\subset C([0,T];C^{2,\beta}(\bar\Omega))
    \qquad\text{for every }\beta<\alpha.
\]

For $\phi\in\mathcal S_M$, let $X$ be the flow generated by
$v=\nabla\phi$ and define $\omega=T_1(\phi)$ by
\begin{equation}
\label{eq:transport}
\begin{cases}
\displaystyle
\frac{d}{dt}\omega(t,X(t,a))
=
-\bigl(\omega(t,X(t,a))-\bar\omega(t)\bigr),\\[1mm]
\omega(0,\cdot)=\omega_0,
\end{cases}
\qquad
\bar\omega(t)
=
\frac{\int_\Omega\rho\,\omega(t)\,dx}
     {\int_\Omega\rho\,dx}.
\end{equation}
For a fixed flow this is a linear nonlocal ordinary differential equation after
pullback by $X$, hence it has a unique solution in
$C([0,T];C(\bar\Omega))$. Define $T_2$ at each time by the normalized weighted
Neumann problem
\begin{equation}
\label{eq:elliptic}
\begin{cases}
\displaystyle
\frac{1}{\rho}\nabla\cdot(\rho\nabla\phi)
=
-(\omega-\bar\omega)
&\text{in }\Omega,\\
\partial_n\phi=0
&\text{on }\partial\Omega,\\
\displaystyle\int_\Omega\phi\,dx=0.
\end{cases}
\end{equation}
Thus $\mathcal T:=T_2\circ T_1$ maps $\mathcal S_M$ into
$C([0,T];C(\bar\Omega))$ whenever the right-hand side is defined.

We use the standard Schauder estimate for the normalized Neumann problem; see,
for example, Chapter~6 of \cite{Gilbarg1977EllipticPD}.

\begin{proposition}[Schauder estimate]
\label{prop:schauder_estimate}
For every $\alpha\in(0,1)$ there is a constant
$C_{\alpha,\Omega,\rho}>0$ such that
\[
    \|T_2(\omega)(t)\|_{C^{2,\alpha}(\bar\Omega)}
    \le
    C_{\alpha,\Omega,\rho}
    \|\omega(t)-\bar\omega(t)\|_{C^{0,\alpha}(\bar\Omega)}
\]
for every $t$.
\end{proposition}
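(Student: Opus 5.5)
We reduce \cref{prop:schauder_estimate} to the classical Schauder theory for the Neumann problem, applied to the weighted operator written in nondivergence form. Fix $t$ and write $f:=-(\omega(t)-\bar\omega(t))$. Expanding the weighted operator gives
\[
    \frac1\rho\nabla\cdot(\rho\nabla\phi)
    =
    \Delta\phi+\nabla\log\rho\cdot\nabla\phi .
\]
Since $\rho\in C^\infty(\bar\Omega)$ is strictly positive, the drift $b:=\nabla\log\rho$ is smooth up to the boundary. Thus $L\phi:=\Delta\phi+b\cdot\nabla\phi$ is uniformly elliptic with $C^{0,\alpha}$ coefficients and no zeroth-order term. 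The problem \eqref{eq:elliptic} is then $L\phi=f$ with $\partial_n\phi=0$ and $\int_\Omega\phi=0$.

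The first step is solvability. The compatibility condition \eqref{eq:bar-solvability} says exactly $\int_\Omega\rho f\,dx=0$, and this is the Fredholm condition for the divergence-form problem $\nabla\cdot(\rho\nabla\phi)=\rho f$ with Neumann data. The kernel of the homogeneous problem consists of constants: multiply by $\phi$ and integrate to get $\int\rho|\nabla\phi|^2=0$. The normalization $\int_\Omega\phi=0$ then fixes a unique solution. Existence in $C^{2,\alpha}(\bar\Omega)$ follows from Fredholm theory combined with the oblique-derivative Schauder theory of Gilbarg--Trudinger, Chapter~6, e.g.\ Theorem~6.31 applied to $L\phi-\lambda\phi$ with $\lambda>0$ and a continuity or Fredholm argument in $\lambda$.

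The second step is the a priori estimate. The boundary Schauder estimate for the oblique (here Neumann) problem gives
\[
    \|\phi\|_{C^{2,\alpha}}\le C\bigl(\|\phi\|_{C^0}+\|f\|_{C^{0,\alpha}}\bigr),
\]
with $C=C(\alpha,\Omega,\rho)$. It remains to remove $\|\phi\|_{C^0}$, which we do by the standard compactness/contradiction argument. Suppose $\phi_k$ are normalized solutions with $\|\phi_k\|_{C^0}=1$ and $\|f_k\|_{C^{0,\alpha}}\to0$. The estimate then bounds $\phi_k$ in $C^{2,\alpha}$, so Arzelà--Ascoli gives a subsequence converging in $C^2$ to some $\phi$. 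This limit solves the homogeneous normalized problem and satisfies $\|\phi\|_{C^0}=1$, but the uniqueness from the first step forces $\phi=0$, a contradiction. Hence $\|\phi\|_{C^0}\le C\|f\|_{C^{0,\alpha}}$, and we obtain $\|T_2(\omega)(t)\|_{C^{2,\alpha}}\le C_{\alpha,\Omega,\rho}\|\omega(t)-\bar\omega(t)\|_{C^{0,\alpha}}$. The constant does not depend on $t$, because the operator $L$ is time-independent.

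The main obstacle is only bookkeeping: checking that the cited oblique-derivative theory in Gilbarg--Trudinger covers the case with zero zeroth-order coefficient once one works on the quotient by constants. The Fredholm and compactness arguments above handle this. No other difficulty is expected, since everything is standard for smooth $\Omega$ and smooth positive $\rho$.
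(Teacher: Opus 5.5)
Your proposal is correct and takes the same approach as the paper. The paper gives no separate argument: it cites the standard Schauder theory for the normalized Neumann problem (Gilbarg--Trudinger, Chapter~6). You fill in what that citation leaves implicit: rewriting the operator as $\Delta+\nabla\log\rho\cdot\nabla$, Fredholm solvability under $\int_\Omega\rho f\,dx=0$ with kernel equal to the constants (removed by the normalization $\int_\Omega\phi\,dx=0$), and the compactness argument that drops the $\|\phi\|_{C^0}$ term from the a priori estimate.
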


\begin{lemma}
\label{lem:norm_transport}
Suppose that
$\phi\in C([0,T];C^{2,\beta}(\bar\Omega))$ for some $\beta>0$, with
$\sup_t\|\phi(t)\|_{C^2}<\infty$, and let $\omega=T_1(\phi)$. Then, for every
$\alpha\in(0,1]$ for which $\omega_0\in C^{0,\alpha}(\bar\Omega)$,
\begin{equation}
    [\omega(t)]_{C^{0,\alpha}}
    \le
    e^{-t}
    \exp\left(
        \alpha\int_0^t
        \|D^2\phi(s)\|_{L^\infty}\,ds
    \right)
    [\omega_0]_{C^{0,\alpha}},
\label{eq:linear-holder-transport}
\end{equation}
and
\[
    \|\omega(t)\|_{L^\infty}
    \le
    \|\omega_0\|_{L^\infty}.
\]
Moreover, for $0\le t'<t\le T$,
\begin{equation}
    \|\omega(t)-\omega(t')\|_{L^\infty}
    \le
    C_T\bigl(|t-t'|+|t-t'|^\alpha\bigr),
\label{eq:linear-time-modulus}
\end{equation}
where $C_T$ depends only on
$\|\omega_0\|_{C^{0,\alpha}}$ and the indicated finite-time bound for $\phi$.
\end{lemma}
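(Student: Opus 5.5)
Throughout I write $W(t,a):=\omega(t,X(t,a))$ for the Lagrangian unknown. The plan is to work entirely in these variables, where \eqref{eq:transport} becomes the linear nonlocal ODE
\[
    \partial_t W(t,a)=-W(t,a)+\bar\omega(t),
\]
and then return to Eulerian variables by inverting the flow.

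\textbf{Step 1: the flow map.} Since $\nabla\phi(t)$ is $C^1$ in space, uniformly in $t$, and continuous in time, and since $\partial_n\phi=0$, the flow $X(t,\cdot)$ is a $C^1$ diffeomorphism of $\bar\Omega$ onto itself. Gronwall applied to $\partial_t D_aX=D^2\phi(t,X)\,D_aX$ gives
\[
    |X(t,a)-X(t,b)|\le e^{\int_0^t\|D^2\phi\|_{L^\infty}}|a-b|.
\]
For the inverse map we have the same bound. For $s\le t$, write $Y_{s,t}$ for the backward flow from time $t$ to time $s$; then
\[
    |Y_{s,t}(x)-Y_{s,t}(y)|\le e^{\int_s^t\|D^2\phi\|}|x-y|.
\]
This follows by running the ODE backwards.

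\textbf{Step 2: the $L^\infty$ bound.} Subtracting the ODE at two labels $a,b$ cancels the nonlocal term $\bar\omega(t)$, exactly as in the proof of \cref{thm:decay}. This gives
\[
    W(t,a)-W(t,b)=e^{-t}\bigl(\omega_0(a)-\omega_0(b)\bigr).
\]
For the sup bound, let $M(t)=\max W(t,\cdot)$ and $m(t)=\min W(t,\cdot)$. Since $\rho>0$, $\bar\omega(t)$ is a weighted average of $\omega(t)$, and because $X(t)$ is a bijection it lies in $[m(t),m(t)+\ldots]\subset[m(t),M(t)]$. At a maximizing label we get $\partial_t W\le 0$, and at a minimizing label $\partial_t W\ge0$. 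A Dini-derivative argument (or the explicit representation $W(t,a)=e^{-t}\omega_0(a)+\int_0^te^{-(t-s)}\bar\omega(s)\,ds$, combined with the oscillation identity) then shows that $M$ is nonincreasing and $m$ is nondecreasing. Hence $\|\omega(t)\|_{L^\infty}\le\|\omega_0\|_{L^\infty}$.

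\textbf{Step 3: the Hölder seminorm.} Take $x,y\in\bar\Omega$, and set $a=Y_{0,t}(x)$, $b=Y_{0,t}(y)$. Then
\[
    |\omega(t,x)-\omega(t,y)|=e^{-t}|\omega_0(a)-\omega_0(b)|\le e^{-t}[\omega_0]_{C^{0,\alpha}}|a-b|^\alpha .
\]
Inserting the Step 1 Lipschitz bound for $Y_{0,t}$, raised to the power $\alpha$, yields \eqref{eq:linear-holder-transport}.

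\textbf{Step 4: the time modulus, which is the main obstacle.} Fix $x$ and let $a=Y_{t',t}(x)$ be the point at time $t'$ that is carried to $x$ at time $t$. I split
\[
    \omega(t,x)-\omega(t',x)=\bigl[\omega(t,x)-\omega(t',a)\bigr]+\bigl[\omega(t',a)-\omega(t',x)\bigr].
\]
The first bracket is a difference along one characteristic. Since $|\partial_tW|\le 2\|\omega_0\|_{L^\infty}$ by Step 2, it is bounded by $2\|\omega_0\|_\infty|t-t'|$. For the second bracket, $|a-x|\le\|\nabla\phi\|_{L^\infty}|t-t'|$, so Step 3 bounds it by
\[
    [\omega(t')]_{C^{0,\alpha}}\,\|\nabla\phi\|_\infty^\alpha\,|t-t'|^\alpha .
\]
The bracket $[\omega(t')]_{C^{0,\alpha}}$ is controlled by $e^{\alpha T\sup\|D^2\phi\|}[\omega_0]_{C^{0,\alpha}}$. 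Combining the two brackets gives \eqref{eq:linear-time-modulus}, with $C_T$ depending only on $\|\omega_0\|_{C^{0,\alpha}}$, $T$ and $\sup_t\|\phi(t)\|_{C^2}$.

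The delicate points are two. First, the backward-flow Lipschitz estimate must hold uniformly on $\bar\Omega$, which uses tangency of $\nabla\phi$ at the boundary so that trajectories stay inside. Second, the comparison in Step 2 must be justified when $\bar\omega$ is only continuous; here I would rely on the explicit Duhamel formula rather than pointwise differentiation of the max.
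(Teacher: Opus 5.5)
Your proposal is correct and follows essentially the same route as the paper. Cancellation of $\bar\omega$ between two characteristics gives $\omega(t,X(t,a))-\omega(t,X(t,b))=e^{-t}(\omega_0(a)-\omega_0(b))$, and the bi-Lipschitz bound on the flow turns this into the H\"older estimate. The time modulus then comes from splitting into an increment along one characteristic plus a spatial H\"older increment. You flow backward from $x$ and apply the H\"older bound at time $t'$, whereas the paper flows forward and applies it at time $t$; the difference is immaterial. Your argument via Duhamel and the oscillation identity, that $\max\omega$ is nonincreasing and $\min\omega$ is nondecreasing, is a welcome rigorous version of the paper's one-line assertion.
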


\begin{proof}
Because $\bar\omega(t)$ is a positive weighted average of $\omega(t,\cdot)$,
the maximum of $\omega$ is nonincreasing and the minimum is nondecreasing.
This proves the $L^\infty$ bound. The flow satisfies
\[
    |X(t,x)-X(t,y)|
    \ge
    |x-y|
    \exp\left(
        -\int_0^t\|D^2\phi(s)\|_{L^\infty}\,ds
    \right).
\]
On the other hand, subtracting the two characteristic equations gives the
exact identity
\[
    \omega(t,X(t,x))-\omega(t,X(t,y))
    =
    e^{-t}\bigl(\omega_0(x)-\omega_0(y)\bigr).
\]
Combining the preceding two formulas proves
\eqref{eq:linear-holder-transport}.

For the time modulus, choose $a$ so that $X(t',a)=x$. The characteristic
equation and the uniform $L^\infty$ bound give
\[
    |\omega(t,X(t,a))-\omega(t',x)|
    \le C|t-t'|.
\]
Furthermore,
\[
    |X(t,a)-x|
    \le
    |t-t'|\|\nabla\phi\|_{C([0,T];L^\infty)}.
\]
Applying the spatial H\"older estimate at time $t$ and then the triangle
inequality yields \eqref{eq:linear-time-modulus}.
\end{proof}

\begin{lemma}[Continuity of the fixed-point map]
\label{lem:fixed-point-continuity}
The map
\[
    \mathcal T=T_2\circ T_1:
    \mathcal S_M\longrightarrow C([0,T];C(\bar\Omega))
\]
is continuous.
\end{lemma}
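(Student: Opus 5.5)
We will show sequential continuity. Let $\phi_n\to\phi$ in $C([0,T];C(\bar\Omega))$ with $\phi_n,\phi\in\mathcal S_M$, and write $\omega_n=T_1(\phi_n)$, $\omega=T_1(\phi)$. We will show that $\mathcal T(\phi_n)\to\mathcal T(\phi)$ uniformly in $(t,x)$.

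\emph{Step 1 (upgrade the convergence of the velocities).} The uniform $C^{2,\alpha}$ bound by $M$, combined with interpolation between $C(\bar\Omega)$ and $C^{2,\alpha}(\bar\Omega)$, turns $\phi_n\to\phi$ in $C([0,T];C)$ into $\phi_n\to\phi$ in $C([0,T];C^{2,\beta})$ for every $\beta<\alpha$. In particular, $\nabla\phi_n\to\nabla\phi$ uniformly on $[0,T]\times\bar\Omega$. All these velocity fields are Lipschitz in $x$ with constant at most $M$. Gronwall's inequality then gives
\[
\sup_{t,a}|X_n(t,a)-X(t,a)|\le T e^{MT}\sup_{t}\|\nabla\phi_n(t)-\nabla\phi(t)\|_{L^\infty}\to0 .
\]

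\emph{Step 2 (convergence of $\omega_n$).} Work in Lagrangian variables and set $W_n(t,a)=\omega_n(t,X_n(t,a))$. The characteristic equation solves explicitly as
\[
W_n(t,a)=e^{-t}\omega_0(a)+\int_0^te^{-(t-s)}\bar\omega_n(s)\,ds .
\]
The only coupling is therefore through the scalar $\bar\omega_n(s)$. Since $q$ is not involved, we compute $\bar\omega_n$ by changing variables along the flow. Set $\Theta_n(t)=\bar\omega_n(t)$, with $J_n=\det DX_n$. Then
\[
Z_\rho\Theta_n(t)=\int_\Omega \rho(X_n(t,a))\,J_n(t,a)\,W_n(t,a)\,da .
\]
This is a linear Volterra equation for $\Theta_n$. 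Its coefficients $\rho(X_n)J_n$ converge uniformly to $\rho(X)J$, because $DX_n\to DX$ by the same Gronwall argument applied to the variational equation, which uses $D^2\phi_n\to D^2\phi$ uniformly. A Gronwall estimate for the Volterra equation then gives $\Theta_n\to\Theta$ uniformly on $[0,T]$, and hence $W_n\to W$ uniformly.

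To pass back to Eulerian variables we need $\omega_n\to\omega$ uniformly in $x$. We write
\[
\omega_n(t,x)-\omega(t,x)=W_n(t,X_n^{-1}(t,x))-W(t,X^{-1}(t,x)).
\]
This difference is controlled by $\|W_n-W\|_\infty$ plus $[W(t)]_{C^{0,\alpha}}|X_n^{-1}-X^{-1}|^\alpha$. Here $W(t)$ is Hölder uniformly in $t$, since it is built from $\omega_0$ plus a constant. The inverse flows converge because they are backward flows of the same velocity fields. Lemma~\ref{lem:norm_transport} also supplies uniform bounds on $[\omega_n(t)]_{C^{0,\alpha}}$ and on $\|\omega_n\|_{L^\infty}$. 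Consequently $\omega_n-\bar\omega_n\to\omega-\bar\omega$ uniformly on $[0,T]\times\bar\Omega$, while remaining bounded in $C^{0,\alpha}$.

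\emph{Step 3 (elliptic step).} The map $T_2$ is linear at each time. By Proposition~\ref{prop:schauder_estimate}, or more simply by the $L^\infty\to C^{1,\gamma}$ (or $W^{2,p}$) estimate for the normalized Neumann problem,
\[
\|T_2(\omega_n)(t)-T_2(\omega)(t)\|_{C(\bar\Omega)}\le C\|(\omega_n-\bar\omega_n)(t)-(\omega-\bar\omega)(t)\|_{L^\infty}.
\]
The right-hand side tends to zero uniformly in $t$. Time continuity of $\mathcal T(\phi)$ itself, needed so that the image lies in $C([0,T];C)$, follows from the time modulus \eqref{eq:linear-time-modulus} and the same estimate.

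The main obstacle is Step 2, namely the convergence of the nonlocal average $\bar\omega_n$ together with the Eulerian recomposition. Both require convergence of the Jacobians $DX_n$, and hence uniform convergence of $D^2\phi_n$, which is where the interpolation of Step 1 is essential. Should the Jacobian route prove awkward, a fallback is a compactness argument. The $\omega_n$ are equi-Hölder in space and equicontinuous in time by Lemma~\ref{lem:norm_transport}, so Arzelà--Ascoli yields subsequential uniform limits. Any such limit satisfies the characteristic problem driven by $\phi$, whose solution is unique for a fixed flow. Uniqueness of the limit then gives convergence of the whole sequence.
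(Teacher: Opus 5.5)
Your proof is correct and follows essentially the same route as the paper. Both arguments use interpolation to upgrade to convergence in $C([0,T];C^{2,\beta}(\bar\Omega))$, ODE stability for the flows $X_n$, their inverses and the Jacobians $J_n$, the Lagrangian pullback of the transport equation, push-forward via $X_n^{-1}$, and continuity of the normalized Neumann inverse from $C(\bar\Omega)$ to $C(\bar\Omega)$. The one difference is cosmetic: you solve the characteristic equation by Duhamel and reduce the nonlocal coupling to a scalar Volterra equation for $\bar\omega_n$, whose kernel is exactly $e^{-(t-s)}$ because $\int_\Omega\rho(X_n)J_n\,da=Z_\rho$, whereas the paper treats it as a linear Banach-space ODE with functionals $\ell_n(t)\to\ell(t)$ in operator norm.
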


\begin{proof}
Let $\phi_n,\phi\in\mathcal S_M$ and assume
$\phi_n\to\phi$ in $C([0,T];C(\bar\Omega))$. By interpolation and the uniform
$C^{2,\alpha}$ bound, for every $\beta<\alpha$,
\[
    \phi_n\longrightarrow\phi
    \quad\text{in }C([0,T];C^{2,\beta}(\bar\Omega)).
\]
Hence the vector fields $v_n=\nabla\phi_n$ and their first spatial derivatives
converge uniformly. Standard stability of ordinary differential equations then
gives uniform convergence of the flows $X_n$, their inverses, and their
Jacobians $J_n=\det DX_n$ on $[0,T]\times\bar\Omega$.

Let $\omega_n=T_1(\phi_n)$ and put
$w_n(t,a)=\omega_n(t,X_n(t,a))$. After the change of variables
$x=X_n(t,a)$, \eqref{eq:transport} becomes the linear Banach-space ODE
\[
    \partial_t w_n(t,a)
    =
    -w_n(t,a)+\ell_n(t)[w_n(t,\cdot)],
\]
where
\[
    \ell_n(t)[g]
    :=
    \frac{\int_\Omega
        \rho(X_n(t,a))\,g(a)\,J_n(t,a)\,da}
         {\int_\Omega\rho\,dx}.
\]
The convergence of $X_n$ and $J_n$ implies
$\ell_n\to\ell$ uniformly in operator norm on $C(\bar\Omega)^*$.
Continuous dependence for this linear ODE therefore yields
\[
    w_n\longrightarrow w
    \quad\text{in }C([0,T];C(\bar\Omega)).
\]
Together with $X_n^{-1}\to X^{-1}$, this gives
$\omega_n\to\omega$ in $C([0,T];C(\bar\Omega))$ and consequently
$\bar\omega_n\to\bar\omega$ uniformly in time.

Finally, the normalized weighted Neumann inverse is continuous from
$C(\bar\Omega)$ into $C(\bar\Omega)$; for instance this follows from the
standard $W^{2,p}$ estimate with any $p>d$ and Sobolev embedding. Applying it
to
$-(\omega_n-\bar\omega_n)+(\omega-\bar\omega)$ proves
$\mathcal T\phi_n\to\mathcal T\phi$ in
$C([0,T];C(\bar\Omega))$.
\end{proof}

\begin{lemma}
\label{lem:compact}
The map
$\mathcal T:\mathcal S_M\to C([0,T];C(\bar\Omega))$
has relatively compact image.
\end{lemma}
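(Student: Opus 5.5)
We apply Arzelà--Ascoli in $C([0,T];C(\bar\Omega))$. The goal is to show that the image $\mathcal T(\mathcal S_M)$ is bounded in $L^\infty([0,T];C^{2,\alpha}(\bar\Omega))$ and equicontinuous in time with values in $C(\bar\Omega)$.

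\textbf{Step 1: uniform $C^{2,\alpha}$ bound.} Fix $\phi\in\mathcal S_M$ and put $\omega=T_1(\phi)$. Since $\sup_t\|\phi(t)\|_{C^{2,\alpha}}\le M$, we have $\|D^2\phi(s)\|_{L^\infty}\le M$. \Cref{lem:norm_transport} then gives
\[
    [\omega(t)]_{C^{0,\alpha}}\le e^{(\alpha M-1)T}[\omega_0]_{C^{0,\alpha}},
    \qquad
    \|\omega(t)\|_{L^\infty}\le\|\omega_0\|_{L^\infty}.
\]
Because $\bar\omega(t)$ is a weighted average, $|\bar\omega(t)|\le\|\omega_0\|_{L^\infty}$. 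Hence $\|\omega(t)-\bar\omega(t)\|_{C^{0,\alpha}}\le K$, where $K$ depends only on $M$, $T$, $\alpha$ and $\|\omega_0\|_{C^{0,\alpha}}$. \Cref{prop:schauder_estimate} then yields
\[
    \sup_t\|\mathcal T\phi(t)\|_{C^{2,\alpha}}\le C_{\alpha,\Omega,\rho}K,
\]
uniformly over $\phi\in\mathcal S_M$. By the compact embedding $C^{2,\alpha}(\bar\Omega)\hookrightarrow C(\bar\Omega)$, for each fixed $t$ the set $\{\mathcal T\phi(t):\phi\in\mathcal S_M\}$ is relatively compact in $C(\bar\Omega)$.

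\textbf{Step 2: equicontinuity in time.} By linearity of $T_2$,
\[
    \mathcal T\phi(t)-\mathcal T\phi(t')
    =T_2\bigl(\omega(t)-\omega(t')\bigr),
\]
where $T_2$ applied to the difference includes the recentering of its mean. As in the proof of \cref{lem:fixed-point-continuity}, the normalized weighted Neumann inverse is bounded from $C(\bar\Omega)$ into $C(\bar\Omega)$; this follows from $W^{2,p}$ estimates with $p>d$ and Sobolev embedding. Therefore
\[
    \|\mathcal T\phi(t)-\mathcal T\phi(t')\|_{C(\bar\Omega)}
    \le C\bigl(\|\omega(t)-\omega(t')\|_{L^\infty}+|\bar\omega(t)-\bar\omega(t')|\bigr)
    \le 2C\|\omega(t)-\omega(t')\|_{L^\infty}.
\]
The time modulus \eqref{eq:linear-time-modulus} bounds the last quantity by $C_T(|t-t'|+|t-t'|^\alpha)$. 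Here $C_T$ depends only on $\|\omega_0\|_{C^{0,\alpha}}$ and $M$, since $\|\nabla\phi\|_{L^\infty}\le M$ and the spatial H\"older bound from Step 1 is uniform. This gives a modulus of continuity in time that is uniform over $\mathcal S_M$.

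\textbf{Step 3: conclusion, and the main obstacle.} The vector-valued Arzelà--Ascoli theorem, applied with pointwise relative compactness from Step 1 and equicontinuity from Step 2, shows that $\mathcal T(\mathcal S_M)$ is relatively compact in $C([0,T];C(\bar\Omega))$.

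The only delicate point is that every constant must be uniform in $\phi\in\mathcal S_M$. This concerns in particular the constant in \eqref{eq:linear-time-modulus}. Its proof uses $\|\nabla\phi\|_{L^\infty}$, $\sup_t\|\phi(t)\|_{C^2}\le M$, and the H\"older seminorm at time $t$. All three are controlled through $M$ and the exponential factor $e^{\alpha MT}$, so I expect this check to go through directly.
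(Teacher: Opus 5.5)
Your proof is correct and follows the paper's argument essentially step for step. You obtain a uniform spatial $C^{2,\alpha}$ bound from \cref{lem:norm_transport} and the Schauder estimate, and a uniform time modulus from the linearity of $T_2$, the $C(\bar\Omega)\to C(\bar\Omega)$ bound for the normalized Neumann inverse and \eqref{eq:linear-time-modulus}, then conclude with Arzel\`a--Ascoli. One harmless slip: when $\alpha M<1$ the factor $e^{(\alpha M-1)t}$ is largest at $t=0$, not $t=T$, so your H\"older bound in Step~1 should read $[\omega(t)]_{C^{0,\alpha}}\le e^{\alpha MT}[\omega_0]_{C^{0,\alpha}}$ (or use $\max\{1,e^{(\alpha M-1)T}\}$).
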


\begin{proof}
By \cref{lem:norm_transport}, the family
$\{T_1(\phi):\phi\in\mathcal S_M\}$ is uniformly bounded in
$L^\infty([0,T];C^{0,\alpha}(\bar\Omega))$ and has a common time modulus in
$C(\bar\Omega)$. The Schauder estimate gives a uniform spatial
$C^{2,\alpha}$ bound for $\mathcal T(\mathcal S_M)$. The normalized weighted
Neumann inverse, together with \eqref{eq:linear-time-modulus}, gives a common
time modulus for $\mathcal T(\mathcal S_M)$ in $C(\bar\Omega)$.
The compact embedding
$C^{2,\alpha}(\bar\Omega)\Subset C(\bar\Omega)$ and the
Arzel\`a--Ascoli theorem therefore imply relative compactness in
$C([0,T];C(\bar\Omega))$.
\end{proof}

\begin{theorem}[Local-in-time existence]
\label{thm:local_in_time_existence}
Suppose that
$\omega_0\in C^{0,\alpha}(\bar\Omega)$ for some $\alpha\in(0,1)$.
There is a constant $C=C(\Omega,\rho,\alpha)>0$ such that for every
$\gamma>1$ and every $T$ satisfying
\begin{equation}
    T
    \le
    \frac{\log\gamma}
         {\alpha\gamma C
          (1+\|\omega_0\|_{C^{0,\alpha}(\bar\Omega)})},
\label{eq:local-lifespan}
\end{equation}
there exists a solution
\[
    \omega\in
    L^\infty([0,T];C^{0,\alpha}(\bar\Omega))
    \cap C([0,T];C(\bar\Omega)),
\]
\[
    \phi\in
    L^\infty([0,T];C^{2,\alpha}(\bar\Omega))
    \cap C([0,T];C(\bar\Omega))
\]
of \eqref{eq:system}. For every $\beta<\alpha$, interpolation gives
$\omega\in C([0,T];C^{0,\beta})$ and
$\phi\in C([0,T];C^{2,\beta})$. Along each characteristic,
$t\mapsto\omega(t,X(t,a))$ is continuously differentiable.
\end{theorem}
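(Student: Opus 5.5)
We will apply Schauder's fixed-point theorem to $\mathcal T=T_2\circ T_1$ on the closed convex set $\mathcal S_M$ of \eqref{eq:S}, viewed inside the Banach space $C([0,T];C(\bar\Omega))$. By \cref{lem:fixed-point-continuity} the map is continuous, and by \cref{lem:compact} its image is relatively compact. So the only remaining task is to choose $M$ and $T$ so that $\mathcal T(\mathcal S_M)\subset\mathcal S_M$. A fixed point $\phi=\mathcal T\phi$ together with $\omega=T_1(\phi)$ then solves \eqref{eq:system}. The Neumann condition and the zero-mean normalization hold by construction of $T_2$ in \eqref{eq:elliptic}.

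\textbf{Invariance.} Take $\phi\in\mathcal S_M$. Then $\|D^2\phi(s)\|_{L^\infty}\le M$, so \cref{lem:norm_transport} gives
\[
    [\omega(t)]_{C^{0,\alpha}}\le e^{\alpha Mt}[\omega_0]_{C^{0,\alpha}},
    \qquad
    \|\omega(t)\|_{L^\infty}\le\|\omega_0\|_{L^\infty}.
\]
Since $\bar\omega$ is a weighted average, $|\bar\omega|\le\|\omega_0\|_{L^\infty}$. Hence
\[
    \|\omega(t)-\bar\omega(t)\|_{C^{0,\alpha}}
    \le 2e^{\alpha MT}\|\omega_0\|_{C^{0,\alpha}}.
\]
\Cref{prop:schauder_estimate} then yields
\[
    \sup_t\|\mathcal T\phi(t)\|_{C^{2,\alpha}}
    \le C e^{\alpha MT}\bigl(1+\|\omega_0\|_{C^{0,\alpha}}\bigr).
\]
Now set $M:=\gamma C(1+\|\omega_0\|_{C^{0,\alpha}})$. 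Condition \eqref{eq:local-lifespan} says exactly that $\alpha MT\le\log\gamma$, so $e^{\alpha MT}\le\gamma$. The bound above is therefore at most $M$, which is the required invariance.

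\textbf{Regularity of the fixed point.} The bounds above give $\omega\in L^\infty([0,T];C^{0,\alpha})$ and $\phi\in L^\infty([0,T];C^{2,\alpha})$. Continuity of $\omega$ into $C(\bar\Omega)$ follows from \eqref{eq:linear-time-modulus}. For $\beta<\alpha$, interpolate between the uniform $C^{0,\alpha}$ bound and the $C^0$ time modulus to get $\omega\in C([0,T];C^{0,\beta})$. The Schauder estimate at level $\beta$ then gives $\phi\in C([0,T];C^{2,\beta})$. After pullback, the characteristic equation is the linear Banach-space ODE used in \cref{lem:fixed-point-continuity}, and its coefficient $\bar\omega(t)$ is continuous. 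So $t\mapsto\omega(t,X(t,a))$ is $C^1$ and satisfies the equation.

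\textbf{Main obstacle.} The delicate point is the invariance step. The Hölder growth factor $e^{\alpha Mt}$ depends on $M$ itself, so $M$ cannot simply be taken large. This is resolved by the $\gamma$-trade-off encoded in \eqref{eq:local-lifespan}. A secondary subtlety is that $\mathcal S_M$ is closed in $C([0,T];C(\bar\Omega))$. This holds because uniform $C^{2,\alpha}$ bounds are lower semicontinuous under uniform convergence, via Arzelà--Ascoli and uniqueness of limits, and the Neumann and mean conditions pass to the limit through $C^{2,\beta}$ convergence.
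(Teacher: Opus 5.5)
Your proposal is correct and follows essentially the same route as the paper. The paper also applies Schauder's fixed-point theorem on $\mathcal S_M$ with $M=\gamma C(1+\|\omega_0\|_{C^{0,\alpha}})$, proves invariance from \cref{lem:norm_transport}, the Schauder estimate and $e^{\alpha MT}\le\gamma$, and takes continuity and compactness from \cref{lem:fixed-point-continuity,lem:compact}. The small differences are that you use the factor $2$ coming from $|\bar\omega|\le\|\omega_0\|_{L^\infty}$ where the paper absorbs a constant $1$ into $C$, and that you justify the closedness of $\mathcal S_M$, which the paper only asserts.
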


\begin{proof}
Choose, after enlarging the Schauder constant if necessary,
\[
    M
    :=
    \gamma C
    \bigl(1+\|\omega_0\|_{C^{0,\alpha}}\bigr).
\]
For $\phi\in\mathcal S_M$, \cref{lem:norm_transport} gives
\[
    \sup_{t\le T}\|T_1(\phi)(t)\|_{C^{0,\alpha}}
    \le
    \|\omega_0\|_{L^\infty}
    +
    e^{\alpha MT}[\omega_0]_{C^{0,\alpha}}
    \le
    1+e^{\alpha MT}\|\omega_0\|_{C^{0,\alpha}},
\]
where the harmless constant $1$ is absorbed into $C$. Consequently,
\[
    \sup_{t\le T}
    \|\mathcal T\phi(t)\|_{C^{2,\alpha}}
    \le
    C\left(
        1+e^{\alpha MT}
        \|\omega_0\|_{C^{0,\alpha}}
    \right).
\]
Condition \eqref{eq:local-lifespan} implies
$e^{\alpha MT}\le\gamma$, and hence
\[
    C\left(
        1+\gamma\|\omega_0\|_{C^{0,\alpha}}
    \right)
    \le
    \gamma C
    \left(1+\|\omega_0\|_{C^{0,\alpha}}\right)
    =M.
\]
Thus $\mathcal T(\mathcal S_M)\subset\mathcal S_M$.
By \cref{lem:fixed-point-continuity,lem:compact},
$\mathcal T$ is a continuous compact self-map of the nonempty closed convex
set $\mathcal S_M$. Schauder's fixed-point theorem gives a fixed point.
The regularity and the characteristic formulation follow from the construction
and \cref{lem:norm_transport}.
\end{proof}

\subsection{Global continuation}
\label{subsec:global-continuation-proof}

We next derive the logarithmic elliptic estimate needed to prevent finite-time
growth of the H\"older norm. The argument uses two standard endpoint estimates
for smooth regular elliptic Neumann problems: global BMO regularity and the
Schauder estimate. For convenience, write
\[
    \|g\|_{\mathrm{BMO}_*(\Omega)}
    :=
    \|g\|_{L^1(\Omega)}
    +
    [g]_{\mathrm{BMO}(\Omega)}.
\]
For smooth bounded domains, the global BMO estimate for regular elliptic
boundary-value problems gives, for the normalized solution of
\[
    \mathcal L_\rho\phi
    :=
    \frac1\rho\nabla\cdot(\rho\nabla\phi)
    =f,
    \qquad
    \partial_n\phi=0,
    \qquad
    \int_\Omega\phi\,dx=0,
\]
the bound
\begin{equation}
    \|D^2\phi\|_{\mathrm{BMO}_*(\Omega)}
    \le
    C_{\Omega,\rho}\|f\|_{L^\infty(\Omega)}.
\label{eq:bmo-elliptic}
\end{equation}
This is a standard endpoint regularity estimate for regular elliptic boundary
problems; see, for example, \cite{elbaraka2005bmo}. The lower-order coefficient
$\nabla\log\rho$ is smooth, and the homogeneous Neumann condition satisfies the
usual complementing condition.

We also record the elementary logarithmic interpolation step used below.

\begin{lemma}[BMO--H\"older logarithmic interpolation]
\label{lem:bmo-holder-log}
Let $\Omega$ be a bounded smooth domain and let $\alpha\in(0,1)$. There is a
constant $C=C(\Omega,\alpha)$ such that every
$g\in C^{0,\alpha}(\bar\Omega)$ satisfies
\begin{equation}
    \|g\|_{L^\infty(\Omega)}
    \le
    C M
    \left[
        1+
        \log^+\left(\frac{H}{M}\right)
    \right],
\label{eq:bmo-holder-log}
\end{equation}
where
\[
    M:=\|g\|_{\mathrm{BMO}_*(\Omega)},
    \qquad
    H:=\|g\|_{C^{0,\alpha}(\bar\Omega)},
\]
with the right-hand side interpreted as zero when $M=0$.
\end{lemma}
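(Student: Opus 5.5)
We prove \eqref{eq:bmo-holder-log} pointwise. Fix $x_0\in\bar\Omega$ and a radius $r\in(0,r_0]$, where $r_0=r_0(\Omega)$ is small enough that $\Omega$ satisfies a uniform interior cone (measure-density) condition at scale $r_0$. That condition gives $|\Omega_r(x_0)|\ge c\,r^d$ for $\Omega_r(x):=B_r(x)\cap\Omega$. We split
\[
    |g(x_0)|
    \le
    |g(x_0)-g_{\Omega_r(x_0)}|
    +
    |g_{\Omega_r(x_0)}|,
\]
where $g_E$ denotes the average over $E$. The first term is bounded by Hölder continuity:
\[
    |g(x_0)-g_{\Omega_r(x_0)}|\le [g]_{C^{0,\alpha}}\,r^\alpha\le H r^\alpha .
\]
The second term is controlled by a telescoping sum of averages over dyadic balls.

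Put $r_k=2^k r$ for $k=0,1,\dots,N$, with $N$ chosen so that $r_N\in(r_0/2,r_0]$; thus $N\le \log_2(r_0/r)+1$. Since $\Omega_{r_k}\subset\Omega_{r_{k+1}}$ and the measures are comparable, $|\Omega_{r_{k+1}}|\le C|\Omega_{r_k}|$, we get
\[
    |g_{\Omega_{r_k}}-g_{\Omega_{r_{k+1}}}|
    \le
    \frac{1}{|\Omega_{r_k}|}\int_{\Omega_{r_{k+1}}}|g-g_{\Omega_{r_{k+1}}}|
    \le C[g]_{\mathrm{BMO}}.
\]
Here we use the BMO seminorm over the sets $B\cap\Omega$. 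If BMO$(\Omega)$ is instead defined with cubes contained in $\Omega$, the standard equivalence of the two definitions on Lipschitz domains gives the same bound. At the top scale,
\[
    |g_{\Omega_{r_N}}|\le |\Omega_{r_N}|^{-1}\|g\|_{L^1}\le C r_0^{-d}\|g\|_{L^1}.
\]
Summing the telescoping differences and the top-scale term yields
\[
    |g(x_0)|\le H r^\alpha + C M\bigl(1+\log(r_0/r)\bigr),
\]
with constants depending only on $\Omega$ and $\alpha$.

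It remains to choose $r$. If $H\le M r_0^{-\alpha}$, take $r=r_0$; then $|g(x_0)|\le CM$. Otherwise take $r=r_0(M r_0^{-\alpha}/H)^{1/\alpha}<r_0$, so that $Hr^\alpha = M$ and
\[
    \log(r_0/r)=\alpha^{-1}\log\bigl(H r_0^\alpha/M\bigr)\le C\bigl(1+\log^+(H/M)\bigr).
\]
Taking the supremum over $x_0$ gives \eqref{eq:bmo-holder-log}. If $M=0$, then $g=0$ almost everywhere, hence everywhere by continuity, which matches the stated convention.

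The main obstacle is the boundary geometry: we need the measure-density comparability $|\Omega_{2r}(x)|\le C|\Omega_r(x)|$ and $|\Omega_r(x)|\ge cr^d$ uniformly for $x$ near $\partial\Omega$, together with the matching definition of the BMO seminorm on $\Omega$. Both follow from smoothness of $\partial\Omega$, via local flattening and the uniform cone property. Everything else is the standard dyadic telescoping argument.
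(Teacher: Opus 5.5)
Your proposal is correct and follows essentially the same argument as the paper. Both proofs use a H\"older bound at scale $r$, then telescope the averages over the dyadic sets $\Omega\cap B_{2^k r}(x)$ up to scale $r_0$ with control by the BMO seminorm, bound the top-scale average by the $L^1$ norm, and make the same choice $r=r_0$ or $r=(M/H)^{1/\alpha}$. Your added remarks on the boundary measure-density condition, on which definition of the BMO seminorm is used, and on the case $M=0$ only make explicit points the paper leaves implicit.
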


\begin{proof}
Because $\Omega$ is smooth, there are $r_0,c_0>0$ such that
$|\Omega\cap B_r(x)|\ge c_0r^d$ for every $x\in\bar\Omega$ and
$0<r\le r_0$. Denote
$\Omega_r(x):=\Omega\cap B_r(x)$ and by $g_{\Omega_r(x)}$ the average of $g$
over this set. H\"older continuity gives
\[
    |g(x)-g_{\Omega_r(x)}|
    \le
    C H r^\alpha.
\]
Choose an integer $N\ge0$ so that
$2^Nr\le r_0<2^{N+1}r$. By telescoping the averages over the nested sets
$\Omega_{2^kr}(x)$ and using the uniform volume comparability furnished by the
smooth boundary,
\[
    |g_{\Omega_r(x)}-g_{\Omega_{2^Nr}(x)}|
    \le
    C N [g]_{\mathrm{BMO}(\Omega)}.
\]
The average on the largest set is bounded by
$C\|g\|_{L^1(\Omega)}$. Hence
\[
    |g(x)|
    \le
    C\bigl(Hr^\alpha+M(1+\log(r_0/r))\bigr).
\]
If $H\le Mr_0^{-\alpha}$, take $r=r_0$. Otherwise take
$r=(M/H)^{1/\alpha}<r_0$. This yields
\eqref{eq:bmo-holder-log}, uniformly in $x$.
\end{proof}

\begin{lemma}[Logarithmic elliptic control]
\label{lem:log_control}
Fix $\alpha\in(0,1)$ and $\kappa>0$. Let
$\omega\in C^{0,\alpha}(\bar\Omega)$ satisfy
$\|\omega\|_{L^\infty}\le\kappa$, let
\[
    \bar\omega
    =
    \frac{\int_\Omega\rho\omega\,dx}{\int_\Omega\rho\,dx},
\]
and let $\phi$ solve
\[
    \mathcal L_\rho\phi
    =
    -(\omega-\bar\omega),
    \qquad
    \partial_n\phi=0,
    \qquad
    \int_\Omega\phi\,dx=0.
\]
Then
\begin{equation}
    \|D^2\phi\|_{L^\infty(\Omega)}
    \le
    C_{\Omega,\rho,\alpha,\kappa}
    \left[
        1+\log\!\left(e+\|\omega\|_{C^{0,\alpha}(\bar\Omega)}\right)
    \right].
\label{eq:log-elliptic-control}
\end{equation}
\end{lemma}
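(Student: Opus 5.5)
The plan is to apply \cref{lem:bmo-holder-log} to $g=D^2\phi$ (componentwise), with $M$ controlled by the BMO estimate \eqref{eq:bmo-elliptic} and $H$ by the Schauder estimate of \cref{prop:schauder_estimate}. The right-hand side is $f=-(\omega-\bar\omega)$. Since $\bar\omega$ is a $\rho$-weighted average of $\omega$, we have $|\bar\omega|\le\kappa$, hence $\|f\|_{L^\infty}\le 2\kappa$. Therefore \eqref{eq:bmo-elliptic} gives
\[
M:=\|D^2\phi\|_{\mathrm{BMO}_*(\Omega)}\le 2C_{\Omega,\rho}\kappa=:K_1 .
\]
Next, $\|\omega-\bar\omega\|_{C^{0,\alpha}}\le \|\omega\|_{C^{0,\alpha}}+\kappa$, because subtracting a constant does not change the seminorm. \cref{prop:schauder_estimate} then gives
\[
H:=\|D^2\phi\|_{C^{0,\alpha}(\bar\Omega)}\le C_{\alpha,\Omega,\rho}\bigl(\kappa+\|\omega\|_{C^{0,\alpha}}\bigr).
\]

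Combining the two bounds requires the monotonicity of the function $s\mapsto s\,[1+\log^+(H/s)]$ on $(0,\infty)$. It is nondecreasing: for $s<H$ its derivative is $\log(H/s)\ge0$, and for $s\ge H$ it equals $s$. Hence $M\le K_1$ lets us replace $M$ by $K_1$ in \eqref{eq:bmo-holder-log}. The case $M=0$ is trivial, and if $K_1=0$ then $\omega$ is constant and $\phi=0$. We obtain
\[
\|D^2\phi\|_{L^\infty}\le C K_1\bigl[1+\log^+(H/K_1)\bigr]\le C K_1\bigl[1+\log^+ H+\log^+(1/K_1)\bigr].
\]

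To finish, use $\log^+ H\le \log\bigl(e+C_{\alpha,\Omega,\rho}(\kappa+\|\omega\|_{C^{0,\alpha}})\bigr)\le C'+\log\bigl(e+\|\omega\|_{C^{0,\alpha}}\bigr)$, where $C'$ depends only on $\Omega,\rho,\alpha,\kappa$. This follows from $\log(e+ab)\le\log(e+a)+\log(e+b)$. The constant terms $\log^+(1/K_1)$ and $C'$ are absorbed into $C_{\Omega,\rho,\alpha,\kappa}$, which yields \eqref{eq:log-elliptic-control}.

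The argument is bookkeeping once the two endpoint estimates are available. The only delicate point is handling $M$ as an upper bound rather than an exact value, which the monotonicity above settles. I would also check that \eqref{eq:bmo-elliptic} includes the $L^1$ part of the $\mathrm{BMO}_*$ norm for $D^2\phi$; that part follows from the $W^{2,p}$ estimate with bounded right-hand side in any case.
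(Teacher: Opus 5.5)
Your proposal is correct and follows the paper's proof. You bound $\|f\|_{L^\infty}\le 2\kappa$ to control $\|D^2\phi\|_{\mathrm{BMO}_*}$ via the global BMO estimate, bound $\|D^2\phi\|_{C^{0,\alpha}}$ linearly in $1+\|\omega\|_{C^{0,\alpha}}$ via Schauder, and feed both into the BMO--H\"older logarithmic interpolation. Two steps the paper leaves implicit are handled well: your check that $s\mapsto s[1+\log^+(H/s)]$ is nondecreasing justifies inserting upper bounds for $M$, and using the normalized Schauder estimate directly replaces the paper's unnormalized Schauder bound plus a $W^{2,p}$ control of $\|\phi\|_{L^\infty}$.
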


\begin{proof}
Set $f:=-(\omega-\bar\omega)$. Since $\bar\omega$ is a weighted average,
\[
    \|f\|_{L^\infty}\le 2\kappa,
    \qquad
    \|f\|_{C^{0,\alpha}}
    \le
    C_\kappa\left(1+\|\omega\|_{C^{0,\alpha}}\right).
\]
By \eqref{eq:bmo-elliptic},
\[
    \|D^2\phi\|_{\mathrm{BMO}_*}
    \le
    C_{\Omega,\rho}\|f\|_{L^\infty}.
\]
The global Schauder estimate and the normalization of $\phi$ give
\[
    \|D^2\phi\|_{C^{0,\alpha}}
    \le
    C_{\Omega,\rho,\alpha}
    \left(
        \|f\|_{C^{0,\alpha}}+\|\phi\|_{L^\infty}
    \right)
    \le
    C_{\Omega,\rho,\alpha,\kappa}
    \left(1+\|\omega\|_{C^{0,\alpha}}\right),
\]
where the last bound follows from a standard normalized Neumann
$W^{2,p}$ estimate with $p>d$. Applying
\cref{lem:bmo-holder-log} to $g=D^2\phi$ proves
\eqref{eq:log-elliptic-control}. The case
$\|D^2\phi\|_{\mathrm{BMO}_*}=0$ is immediate.
\end{proof}

\begin{theorem}[Global existence for H\"older data]
\label{thm:global_existence}
Suppose that
$\omega_0\in C^{0,\alpha}(\bar\Omega)$ for some $\alpha\in(0,1)$.
Then there exist
\[
    \omega\in
    L^\infty_{\mathrm{loc}}([0,\infty);C^{0,\alpha}(\bar\Omega))
    \cap C([0,\infty);C(\bar\Omega)),
\]
\[
    \phi\in
    L^\infty_{\mathrm{loc}}([0,\infty);C^{2,\alpha}(\bar\Omega))
    \cap C([0,\infty);C(\bar\Omega))
\]
solving \eqref{eq:system}, with $C^1$ evolution of $\omega$ along
characteristics. For every $\beta<\alpha$, both functions are continuous in
time with values in $C^{0,\beta}$ and $C^{2,\beta}$, respectively.
\end{theorem}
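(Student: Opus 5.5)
The plan is a standard continuation argument. We combine the local existence result \cref{thm:local_in_time_existence} with an a priori bound on the H\"older norm, and that bound comes from the logarithmic elliptic control \cref{lem:log_control} together with a Gronwall-type (Osgood) inequality.

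\textbf{Uniform control of the lifespan.} By \eqref{eq:local-lifespan}, the local existence time depends only on $\|\omega_0\|_{C^{0,\alpha}}$. Fixing $\gamma=2$, a solution starting from data with $C^{0,\alpha}$ norm at most $K$ exists on an interval of length at least
\[
\tau(K)=\frac{\log 2}{2\alpha C(1+K)}.
\]
It therefore suffices to show that on any finite interval $[0,T]$ on which a solution has been constructed, $\sup_{t\le T}\|\omega(t)\|_{C^{0,\alpha}}\le K_T<\infty$, with $K_T$ depending only on $T$, $\omega_0$, $\Omega$, $\rho$, $\alpha$. Restarting from time $t_1$ with data $\omega(t_1)$ then extends the solution by the fixed step $\tau(K_{T'})$ as long as we stay below any prescribed $T'$, so the solution reaches $T'$. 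Since $T'$ is arbitrary, we get existence on $[0,\infty)$. To glue the pieces, the concatenated $(\omega,\phi)$ must still satisfy \eqref{eq:system}. This holds because the characteristic ODE and the transport relation are local in time, and continuity at the junction times follows from $\omega\in C([0,t_1];C(\bar\Omega))$.

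\textbf{A priori H\"older bound.} The maximum principle part of \cref{lem:norm_transport} gives $\|\omega(t)\|_{L^\infty}\le\|\omega_0\|_{L^\infty}=:\kappa$ for all $t$. Hence \cref{lem:log_control} applies with a fixed $\kappa$, giving
\[
\|D^2\phi(t)\|_{L^\infty}\le C_1\bigl[1+\log(e+\|\omega(t)\|_{C^{0,\alpha}})\bigr].
\]
Let $y(t):=\log(e+\|\omega(t)\|_{C^{0,\alpha}})$. Inequality \eqref{eq:linear-holder-transport}, applied to the fixed point $\phi$ (which has the regularity required by that lemma), gives
\[
[\omega(t)]_{C^{0,\alpha}}\le\exp\Bigl(\alpha\int_0^t\|D^2\phi\|_{L^\infty}\,ds\Bigr)[\omega_0]_{C^{0,\alpha}}.
\]
Adding the $L^\infty$ bound and taking logarithms yields an integral inequality
\[
y(t)\le A+\alpha C_1\int_0^t(1+y(s))\,ds,
\]
where $A$ depends only on $\|\omega_0\|_{C^{0,\alpha}}$. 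This inequality is linear in $y$, so Gronwall's lemma gives $y(t)\le (A+\alpha C_1 t)e^{\alpha C_1 t}$. Consequently $\|\omega(t)\|_{C^{0,\alpha}}$ grows at most double exponentially and stays finite on every bounded interval, which gives $K_T$. The Schauder estimate \cref{prop:schauder_estimate} then yields $\phi\in L^\infty_{\mathrm{loc}}([0,\infty);C^{2,\alpha})$. The time-continuity statements in $C^{0,\beta}$ and $C^{2,\beta}$ for $\beta<\alpha$ follow from \eqref{eq:linear-time-modulus} and interpolation, exactly as in \cref{thm:local_in_time_existence}.

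\textbf{Main obstacle.} The delicate step is justifying that \eqref{eq:linear-holder-transport} and \cref{lem:log_control} can be applied at every time, with constants independent of the restart. The constants in \cref{lem:log_control} depend only on $\kappa$, and $\kappa$ never increases along the flow, so this part is fine. Care is also needed because the measurability and integrability of $t\mapsto\|D^2\phi(t)\|_{L^\infty}$ are only known in $L^\infty_{\mathrm{loc}}$. For this, I would first run the Gronwall argument on each local piece, using a bound on $\|D^2\phi\|_{L^\infty}$ that is uniform on that piece, and then chain the pieces. Since each exponent in the chaining is additive over subintervals, the estimate composes to the same global bound.
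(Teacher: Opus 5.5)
Your proposal is correct and follows essentially the same route as the paper. The maximum principle fixes $\kappa$, and \cref{lem:log_control} together with \eqref{eq:linear-holder-transport} gives a linear Gronwall inequality; you run it on $y=\log(e+\|\omega\|_{C^{0,\alpha}})$ where the paper uses $A(t)=\int_0^t\|D^2\phi\|_{L^\infty}ds$, which is equivalent, and the resulting bound $K(T)$ gives a uniform restart step $\tau_T$ with $\gamma=2$, so finitely many concatenations cover $[0,T]$.
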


\begin{proof}
Start with the local solution furnished by
\cref{thm:local_in_time_existence}. On every interval on which the solution
has been constructed, \cref{lem:norm_transport} gives
\[
    \|\omega(t)\|_{L^\infty}
    \le
    \|\omega_0\|_{L^\infty}
    =:\kappa
\]
and
\begin{equation}
    [\omega(t)]_{C^{0,\alpha}}
    \le
    [\omega_0]_{C^{0,\alpha}}
    \exp\left(
        \alpha\int_0^t
        \|D^2\phi(s)\|_{L^\infty}\,ds
    \right).
\label{eq:holder-growth-global}
\end{equation}
Define
\[
    A(t):=
    \int_0^t\|D^2\phi(s)\|_{L^\infty}\,ds.
\]
Using \cref{lem:log_control} and \eqref{eq:holder-growth-global}, for almost
every time at which the solution is defined,
\[
\begin{aligned}
    A'(t)
    &\le
    C_{\kappa}
    \left[
        1+
        \log\left(
            e+
            \|\omega(t)\|_{C^{0,\alpha}}
        \right)
    \right]\\
    &\le
    C_{\kappa,\omega_0}
    \bigl(1+A(t)\bigr),
\end{aligned}
\]
where the constants also depend on the fixed data
$(\Omega,\rho,\alpha)$. Gronwall's inequality therefore yields, for every
finite $T>0$, an a priori bound
\begin{equation}
    \sup_{0\le t\le T}
    \|\omega(t)\|_{C^{0,\alpha}}
    \le
    K(T)<\infty
\label{eq:finite-holder-bound}
\end{equation}
on every solution segment contained in $[0,T]$, where $K(T)$ depends only on
$T$, $\|\omega_0\|_{C^{0,\alpha}}$, and the fixed data, not on the particular
continuation.

We now continue the solution iteratively, which avoids using uniqueness at the
H\"older level. Suppose that a solution has been constructed up to a time
$t_0<T$. Apply \cref{thm:local_in_time_existence} with initial datum
$\omega(t_0)$ and choose $\gamma=2$. By
\eqref{eq:finite-holder-bound}, the new local solution exists for at least
\[
    \tau_T
    :=
    \frac{\log2}
         {2\alpha C(1+K(T))}>0,
\]
as long as $t_0<T$. Concatenating at $t_0$ extends the previously constructed
solution, and the same a priori estimate remains valid on the enlarged
interval. Therefore at most
$\lceil T/\tau_T\rceil+1$ such continuation steps cover $[0,T]$.
Since $T>0$ is arbitrary, repeating this construction on successive finite
time horizons produces a global solution on $[0,\infty)$.
\end{proof}

\subsection{Uniqueness and stability}
\label{subsec:uniqueness-proof}

We first record that the additional spatial regularity needed by the stability
argument is propagated by the flow.

\begin{lemma}[Propagation of spatial Lipschitz regularity]
\label{lem:lipschitz-propagation}
Let $(\omega,\phi)$ be a global solution of WLF furnished by
\cref{thm:global_existence} and assume
$\omega_0\in C^{0,1}(\bar\Omega)$. Then, for every $T<\infty$,
\[
    \omega\in L^\infty([0,T];C^{0,1}(\bar\Omega)).
\]
More precisely,
\begin{equation}
    [\omega(t)]_{\operatorname{Lip}}
    \le
    e^{-t}
    \exp\left(
        \int_0^t
        \|D^2\phi(s)\|_{L^\infty}\,ds
    \right)
    [\omega_0]_{\operatorname{Lip}},
    \qquad 0\le t\le T.
\label{eq:lipschitz-propagation}
\end{equation}
\end{lemma}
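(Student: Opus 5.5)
The plan is to repeat the argument of \cref{lem:norm_transport} with exponent $\alpha=1$, using the characteristic identity together with a lower bound on how fast the flow map can bring points together. First, by \cref{thm:global_existence} (applied with any $\alpha\in(0,1)$, which is legitimate since $C^{0,1}(\bar\Omega)\subset C^{0,\alpha}(\bar\Omega)$), the potential satisfies $\phi\in L^\infty([0,T];C^{2,\alpha}(\bar\Omega))\cap C([0,T];C^{2,\beta}(\bar\Omega))$. Hence $A(t):=\int_0^t\|D^2\phi(s)\|_{L^\infty}\,ds$ is finite for every $t\le T$. The velocity $\nabla\phi$ is Lipschitz in space uniformly in time and tangential on $\partial\Omega$. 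The characteristic map $X(t,\cdot)$ is therefore a bi-Lipschitz homeomorphism of $\bar\Omega$.

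Second, I would establish the two-point lower bound
\[
    |X(t,x)-X(t,y)|\ge |x-y|\,e^{-A(t)} .
\]
Set $r(t)=|X(t,x)-X(t,y)|$. Then $|r'(t)|\le |\nabla\phi(t,X(t,x))-\nabla\phi(t,X(t,y))|$. The mean-value bound $|\nabla\phi(t,a)-\nabla\phi(t,b)|\le \|D^2\phi(t)\|_{L^\infty}|a-b|$ gives $r'\ge-\|D^2\phi\|_{L^\infty}r$, and Gronwall yields the bound. \textbf{The main obstacle} is this mean-value inequality, since $\Omega$ need not be convex and the segment joining $a$ and $b$ may leave $\Omega$. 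I would handle it by extending $\phi(t)$ to a $C^{2}$ function on $\mathbb R^d$ with comparable $D^2$ bound, or by arguing infinitesimally through the Jacobian. For the latter, one can write $\partial_t DX=D^2\phi(X)\,DX$, so that $|DX(t)^{-1}|\le e^{A(t)}$. Then apply this bound to the inverse map along paths inside $\Omega$: length is measured in the intrinsic metric, and for short distances this is comparable to the Euclidean one on a smooth domain, which suffices because the Lipschitz seminorm is determined locally up to that comparability. I expect the paper simply to use the same inequality already stated in the proof of \cref{lem:norm_transport}, and I would cite it.

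Third, subtracting the characteristic equations in \eqref{eq:system} exactly as in \cref{thm:decay} gives
\[
    \omega(t,X(t,x))-\omega(t,X(t,y))=e^{-t}\bigl(\omega_0(x)-\omega_0(y)\bigr).
\]
For arbitrary $\xi\neq\zeta\in\bar\Omega$, pick $x,y$ with $X(t,x)=\xi$ and $X(t,y)=\zeta$; this uses the bijectivity of the flow. Then
\[
    |\omega(t,\xi)-\omega(t,\zeta)|\le e^{-t}[\omega_0]_{\mathrm{Lip}}|x-y|\le e^{-t}e^{A(t)}[\omega_0]_{\mathrm{Lip}}|\xi-\zeta|,
\]
which is \eqref{eq:lipschitz-propagation}.

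Finally, the bound $\sup_{t\le T}A(t)\le T\sup_t\|D^2\phi\|_{L^\infty}<\infty$ shows $[\omega(t)]_{\mathrm{Lip}}$ is bounded on $[0,T]$. Combined with $\|\omega(t)\|_{L^\infty}\le\|\omega_0\|_{L^\infty}$ from \cref{lem:norm_transport}, this gives $\omega\in L^\infty([0,T];C^{0,1}(\bar\Omega))$.
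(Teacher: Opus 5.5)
Your proposal is essentially the paper's proof. The paper likewise takes $\operatorname{Lip}(X_t^{-1})\le\exp\bigl(\int_0^t\|D^2\phi(s)\|_{L^\infty}\,ds\bigr)$ (the inequality from \cref{lem:norm_transport}), combines it with the exact identity $\omega(t,X_t(x))-\omega(t,X_t(y))=e^{-t}(\omega_0(x)-\omega_0(y))$ and the bijectivity of $X_t$, and appeals to global continuation for finiteness of the exponent. Your non-convexity caveat is fair and is not addressed in the paper, but both of your repairs (a $C^{1,1}$ extension of $\phi(t)$, or $|DX_t^{-1}|\le e^{A(t)}$ plus quasi-convexity of $\Omega$) give \eqref{eq:lipschitz-propagation} only with a domain-dependent constant, inside or in front of the exponential: this suffices for $\omega\in L^\infty([0,T];C^{0,1}(\bar\Omega))$, while the constant-one form holds as written on convex $\Omega$, or for $\|\nabla\omega(t)\|_{L^\infty}$ in place of the Euclidean seminorm.
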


\begin{proof}
Let $X_t(x):=X(t,x)$ denote the characteristic flow. Since
$\nabla\phi\in L^\infty([0,T];C^{1,\alpha}(\bar\Omega))$, the map $X_t$ is
bi-Lipschitz and
\begin{equation}
    \operatorname{Lip}(X_t^{-1})
    \le
    \exp\left(
        \int_0^t\|D^2\phi(s)\|_{L^\infty}\,ds
    \right).
\label{eq:inverse-flow-lipschitz}
\end{equation}
For any $x,y\in\bar\Omega$, subtracting the two characteristic equations gives
\[
    \omega(t,X_t(x))-\omega(t,X_t(y))
    =
    e^{-t}\bigl(\omega_0(x)-\omega_0(y)\bigr).
\]
Given $z_1,z_2\in\bar\Omega$, set
$x=X_t^{-1}(z_1)$ and $y=X_t^{-1}(z_2)$. Then
\[
\begin{aligned}
    |\omega(t,z_1)-\omega(t,z_2)|
    &\le
    e^{-t}[\omega_0]_{\operatorname{Lip}}|x-y|\\
    &\le
    e^{-t}
    \exp\left(
        \int_0^t\|D^2\phi(s)\|_{L^\infty}\,ds
    \right)
    [\omega_0]_{\operatorname{Lip}}|z_1-z_2|.
\end{aligned}
\]
This proves \eqref{eq:lipschitz-propagation}. The integral in the exponent is
finite on every bounded time interval by the global continuation argument.
\end{proof}

\begin{theorem}[Weighted $L^2$ stability and uniqueness]
\label{thm:uniqueness}
Let $(\omega_i,\phi_i)$, $i=1,2$, be two solutions of
\eqref{eq:system} on $[0,T]$ such that, for some $\alpha\in(0,1)$,
\[
    \omega_i\in L^\infty([0,T];W^{1,\infty}(\Omega)),
    \qquad
    \phi_i\in L^\infty([0,T];C^{2,\alpha}(\bar\Omega))
    \cap C([0,T];C(\bar\Omega)).
\]
Then
\begin{equation}
    \|\omega_1(t)-\omega_2(t)\|_{L^2(\rho)}^2
    \le
    e^{C_Tt}
    \|\omega_1(0)-\omega_2(0)\|_{L^2(\rho)}^2,
\label{eq:linear-L2-stability}
\end{equation}
where $C_T$ depends on $\rho$ and on the indicated finite-time norms of the
solutions. In particular, two such solutions with the same initial data
coincide.
\end{theorem}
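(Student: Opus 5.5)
Write $\delta\omega:=\omega_1-\omega_2$, $\delta\phi:=\phi_1-\phi_2$ and $\delta\bar\omega:=\bar\omega_1-\bar\omega_2$. The plan is to derive an Eulerian energy estimate for $\delta\omega$ in $L^2(\rho)$. The nonlocal mean and the coupling through $\delta\phi$ are controlled by the weighted elliptic problem.

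First, I pass from the characteristic formulation to the Eulerian transport equation
\[
    \partial_t\omega_i+\nabla\phi_i\cdot\nabla\omega_i=-(\omega_i-\bar\omega_i).
\]
This holds in the distributional sense, and almost everywhere since $\omega_i$ is Lipschitz in space. To justify it, I use that $t\mapsto\omega_i(t,X_i(t,a))$ is $C^1$ and that the flow is a bi-Lipschitz homeomorphism. Testing against smooth functions after the change of variables $x=X_i(t,a)$ gives the weak form. Subtracting the two equations yields
\[
    \partial_t\delta\omega+\nabla\phi_1\cdot\nabla\delta\omega
    =-(\delta\omega-\delta\bar\omega)-\nabla\delta\phi\cdot\nabla\omega_2.
\]
I multiply by $\rho\,\delta\omega$ and integrate over $\Omega$. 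The time derivative of $\tfrac12\|\delta\omega\|_{L^2(\rho)}^2$ is legitimate because $\delta\omega$ is Lipschitz in $x$ and Lipschitz in $t$; the latter follows from the equation.

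Next, I treat each term of the energy identity.
\begin{itemize}
\item The transport term $\int\rho\,\nabla\phi_1\cdot\nabla\tfrac12|\delta\omega|^2$ integrates by parts using $\partial_n\phi_1=0$. It equals $-\tfrac12\int\nabla\cdot(\rho\nabla\phi_1)|\delta\omega|^2=\tfrac12\int\rho(\omega_1-\bar\omega_1)|\delta\omega|^2$. This is bounded by $\|\omega_1(0)\|_{L^\infty}\|\delta\omega\|_{L^2(\rho)}^2$, using the maximum principle of \cref{lem:norm_transport}. One could equally use the $C^2$ bound of $\phi_1$.
\item The relaxation term gives $-\int\rho|\delta\omega|^2+\delta\bar\omega\int\rho\,\delta\omega$. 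Since $\delta\bar\omega$ is the $\rho$-average of $\delta\omega$, this equals $-\int\rho|\delta\omega-\delta\bar\omega|^2\le0$.
\item The coupling term is $-\int\rho\,\delta\omega\,\nabla\delta\phi\cdot\nabla\omega_2$. I bound it by $\|\nabla\omega_2\|_{L^\infty}\|\delta\omega\|_{L^2(\rho)}\|\nabla\delta\phi\|_{L^2(\rho)}$.
\end{itemize}

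The key step is the elliptic $H^1(\rho)$ control. The difference $\delta\phi$ solves $\nabla\cdot(\rho\nabla\delta\phi)=-\rho(\delta\omega-\delta\bar\omega)$ with the Neumann condition. Testing with $\delta\phi-c$, where $c$ is the $\rho$-mean of $\delta\phi$, and applying the Poincar\'e inequality on the bounded smooth domain with $\rho$ bounded above and below, gives
\[
    \|\nabla\delta\phi\|_{L^2(\rho)}\le C_\rho\|\delta\omega-\delta\bar\omega\|_{L^2(\rho)}\le C_\rho\|\delta\omega\|_{L^2(\rho)}.
\]
Combining the three terms, I obtain $\frac{d}{dt}\|\delta\omega\|_{L^2(\rho)}^2\le C_T\|\delta\omega\|_{L^2(\rho)}^2$. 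Here $C_T$ depends on $\sup_t\|\nabla\omega_2\|_{L^\infty}$, $\|\omega_1\|_{L^\infty}$ and $\rho$. Gronwall's inequality then gives \eqref{eq:linear-L2-stability}, and uniqueness follows from equal initial data.

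I expect the main obstacle to be the rigorous justification of the Eulerian equation and the energy identity at this low regularity. Here $\omega_i$ is only Lipschitz, and the characteristic formulation is the primary definition. If the direct argument is delicate, I would first mollify $\delta\omega$ in space and control the commutator with $\nabla\phi_1$ by the DiPerna--Lions lemma, which applies since $\nabla\phi_1$ is $C^{1,\alpha}$. Alternatively, I would compute the derivative of $\int\rho\,|\delta\omega|^2$ by pulling back along the flow $X_1$, using the Jacobian identity $\partial_tJ_1=(\Delta\phi_1\circ X_1)J_1$.
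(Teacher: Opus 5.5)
Your proposal is correct and follows essentially the same route as the paper. The shared steps are the Eulerian difference equation, the $L^2(\rho)$ energy identity with the transport term integrated by parts via $\nabla\cdot(\rho\nabla\phi_1)=-\rho(\omega_1-\bar\omega_1)$ and $\partial_n\phi_1=0$, the relaxation term giving $-\|\delta\omega-\delta\bar\omega\|_{L^2(\rho)}^2\le 0$, the $H^1(\rho)$ bound on $\nabla\delta\phi$ from testing with the $\rho$-centered potential plus Poincar\'e, and Gronwall.

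The only difference is how the Eulerian equation is justified, and both arguments are valid. You derive it by pulling test functions back along $X_i$ with the Jacobian identity. The paper instead first obtains time-Lipschitz bounds from the characteristic equation, then uses Rademacher's theorem and the chain rule through the bi-Lipschitz spacetime map $(t,a)\mapsto(t,X_i(t,a))$.
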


\begin{proof}
Set
\[
    \delta:=\omega_1-\omega_2,
    \qquad
    \psi:=\phi_1-\phi_2,
    \qquad
    \bar\delta
    :=
    \frac{\int_\Omega\rho\delta\,dx}{\int_\Omega\rho\,dx}.
\]
Writing
\[
    F_i:=-(\omega_i-\bar\omega_i),
    \qquad
    v_i:=\nabla\phi_i,
\]
we first justify the Eulerian form of the transport equation. The Neumann
compatibility condition gives
\[
    \bar\omega_i(t)
    =
    \frac{\int_\Omega\rho\,\omega_i(t)\,dx}
         {\int_\Omega\rho\,dx},
\]
and therefore $F_i\in L^\infty((0,T)\times\Omega)$. Let $X_i$ be the
characteristic flow generated by $v_i$. Since
$v_i\in L^\infty(0,T;C^{1,\alpha}(\bar\Omega))$, the maps
$X_i(t,\cdot)$ are bi-Lipschitz on every finite time interval. Fix
$0\le s<t\le T$ and $x\in\bar\Omega$, choose $a$ such that
$X_i(s,a)=x$, and set $y:=X_i(t,a)$. The characteristic equation and the
spatial Lipschitz bound imply
\[
\begin{aligned}
    |\omega_i(t,x)-\omega_i(s,x)|
    &\le
    |\omega_i(t,x)-\omega_i(t,y)|
    +
    |\omega_i(t,y)-\omega_i(s,x)|
    \\
    &\le
    \|\nabla\omega_i\|_{L^\infty((0,T)\times\Omega)}
    |x-y|
    +
    \int_s^t
    |F_i(r,X_i(r,a))|\,dr
    \\
    &\le
    \left(
        \|\nabla\omega_i\|_{L^\infty((0,T)\times\Omega)}
        \|v_i\|_{L^\infty((0,T)\times\Omega)}
        +
        \|F_i\|_{L^\infty((0,T)\times\Omega)}
    \right)|t-s|.
\end{aligned}
\]
Thus $\omega_i$ is Lipschitz also in time, uniformly in space. Together with
the assumed spatial Lipschitz regularity, this yields
\[
    \omega_i\in W^{1,\infty}((0,T)\times\Omega).
\]
To pass rigorously from the characteristic formulation to the Eulerian
equation, consider the spacetime map
\[
    \Phi_i:(t,a)\longmapsto (t,X_i(t,a)).
\]
On every finite time interval $\Phi_i$ is bi-Lipschitz. Since
$\omega_i\in W^{1,\infty}((0,T)\times\Omega)$, Rademacher's theorem and the
Sobolev chain rule imply, for almost every $(t,a)$,
\[
    \frac{d}{dt}\omega_i(t,X_i(t,a))
    =
    \partial_t\omega_i(t,X_i(t,a))
    +
    v_i(t,X_i(t,a))\cdot
    \nabla\omega_i(t,X_i(t,a)).
\]
Comparing with the characteristic equation and using that the bi-Lipschitz map
$\Phi_i$ sends null sets to null sets in both directions, we obtain
\[
    \partial_t\omega_i+v_i\cdot\nabla\omega_i=F_i
\]
almost everywhere in $(0,T)\times\Omega$.

Consequently, the difference equations hold almost everywhere:
\begin{align}
    \partial_t\delta
    +v_1\cdot\nabla\delta
    +\nabla\psi\cdot\nabla\omega_2
    &=
    -(\delta-\bar\delta),
\label{eq:linear-difference-eulerian}
\\
    \frac1\rho\nabla\cdot(\rho\nabla\psi)
    &=
    -(\delta-\bar\delta).
\label{eq:linear-difference-elliptic}
\end{align}
For the elliptic equation, define the weighted mean
\[
    \langle\psi\rangle_\rho
    :=
    \frac{\int_\Omega\rho\psi\,dx}{\int_\Omega\rho\,dx}.
\]
Testing \eqref{eq:linear-difference-elliptic} with $\psi$ and using
$\int_\Omega\rho(\delta-\bar\delta)\,dx=0$ gives
\[
\begin{aligned}
    \|\nabla\psi\|_{L^2(\rho)}^2
    &=
    \int_\Omega
    \rho(\delta-\bar\delta)
    (\psi-\langle\psi\rangle_\rho)\,dx
    \\
    &\le
    \|\delta-\bar\delta\|_{L^2(\rho)}
    \|\psi-\langle\psi\rangle_\rho\|_{L^2(\rho)}
    \\
    &\le
    C_\rho
    \|\delta-\bar\delta\|_{L^2(\rho)}
    \|\nabla\psi\|_{L^2(\rho)}.
\end{aligned}
\]
Hence
\begin{equation}
    \|\nabla\psi\|_{L^2(\rho)}
    \le
    C_\rho\|\delta-\bar\delta\|_{L^2(\rho)}
    \le
    C_\rho\|\delta\|_{L^2(\rho)},
\label{eq:linear-H1-elliptic-difference}
\end{equation}
where the last inequality follows because $\bar\delta$ is the
$L^2(\rho)$-orthogonal projection of $\delta$ onto the constants.

Since $\delta\in W^{1,\infty}((0,T)\times\Omega)$, the map
$t\mapsto\|\delta(t)\|_{L^2(\rho)}^2$ is absolutely continuous and, for
almost every $t$,
\[
    \frac12\frac{d}{dt}\|\delta(t)\|_{L^2(\rho)}^2
    =
    \int_\Omega\rho\delta\,\partial_t\delta\,dx.
\]
Thus, for almost every $t$, we may multiply
\eqref{eq:linear-difference-eulerian} by $\rho\delta$ and integrate. Since
\[
    \nabla\cdot(\rho v_1)=\rho F_1
\]
and $v_1\cdot n=0$, integration by parts yields
\begin{align*}
\frac12\frac{d}{dt}\|\delta\|_{L^2(\rho)}^2
&=
\frac12\int_\Omega\rho F_1\delta^2\,dx
-
\int_\Omega\rho\delta\,\nabla\psi\cdot\nabla\omega_2\,dx
-
\int_\Omega\rho(\delta-\bar\delta)^2\,dx
\\
&\le
\frac12\|F_1\|_{L^\infty}
    \|\delta\|_{L^2(\rho)}^2
+
\|\nabla\omega_2\|_{L^\infty}
    \|\delta\|_{L^2(\rho)}
    \|\nabla\psi\|_{L^2(\rho)}.
\end{align*}
Using \eqref{eq:linear-H1-elliptic-difference}, we obtain for almost every
$t\in(0,T)$
\[
    \frac{d}{dt}\|\delta\|_{L^2(\rho)}^2
    \le
    C_T\|\delta\|_{L^2(\rho)}^2.
\]
Since the energy is absolutely continuous, Gronwall's inequality proves
\eqref{eq:linear-L2-stability} for every $t\in[0,T]$. If the initial data
agree, then $\delta\equiv0$, and the normalized Neumann problem gives
$\psi\equiv0$.
\end{proof}

Let now $\omega_0\in C^{0,1}(\bar\Omega)$. For every
$\alpha\in(0,1)$ we have
$\omega_0\in C^{0,\alpha}(\bar\Omega)$, so
\cref{thm:global_existence} provides a global H\"older solution.
\Cref{lem:lipschitz-propagation} upgrades this solution to the locally
spatially Lipschitz class, and \cref{thm:uniqueness} gives uniqueness together
with the weighted $L^2(\rho)$ stability estimate. Applying the H\"older
existence result with arbitrary exponents below one and using uniqueness to
identify the resulting solutions yields the regularity stated in
\cref{thm:global_wellposedness}. Hence
\cref{thm:global_wellposedness} follows.

\section{Numerical Experiments}
\label{sec:numerical-experiments}
In this section, we report the performance of weighted Laplacian flow for sampling given distributions. The details of the algorithm can be found in Appendix \ref{sec:algorithm}. Unless stated otherwise, we use $N_p=16{,}000$ particles.
For the two-dimensional examples, WLF is implemented by the grid-based
algorithm on a $128\times128$ grid. When a Kullback--Leibler (KL) curve is shown, the density of the
evolved particles is estimated by Gaussian kernel density estimation (KDE). The
KL value obtained from particles sampled directly from the target distribution
is plotted as a reference floor. The code is available at \url{https://github.com/shwangtangjun/Weighted-Laplacian-Flow}.


\subsection{Bimodal Distribution}
\label{subsec:bimodal-experiments}
The first target lives in the periodic square $[0,2\pi]^2$ and is an equal
mixture of two modes,
\begin{equation*}
    p \;\propto\; \tfrac12\,e^{4(\cos(x-\tfrac{\pi}{2})+\cos(y-\tfrac{3\pi}{2}))}
            \;+\; \tfrac12\,e^{4(\cos(x-\tfrac{3\pi}{2})+\cos(y-\tfrac{\pi}{2}))},
\end{equation*}
with well-separated centers at $(\tfrac{\pi}{2},\tfrac{3\pi}{2})$ and
$(\tfrac{3\pi}{2},\tfrac{\pi}{2})$. This non-log-concave target has a high
energy barrier between the two modes.



To study the crossing barrier capability of the proposed WLF model, 
$q_0$ is initialized as the left component. We take
$\Delta t=10^{-3}$ and $\alpha=1$. Snapshots for WLF are taken over
$t\in[0,1]$. The results are shown in Fig. \ref{fig:unimodal2bimodal}.

\begin{figure}
  \centering
  \subcaptionbox{Particle snapshots.}{\includegraphics[width=0.9\linewidth]{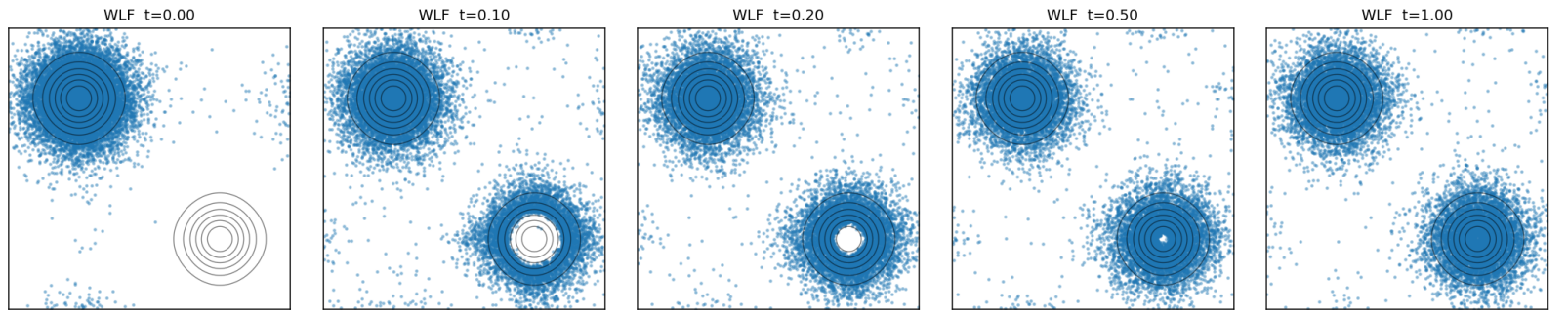}}
  \subcaptionbox{KL divergence.}{\includegraphics[width=0.4\linewidth]{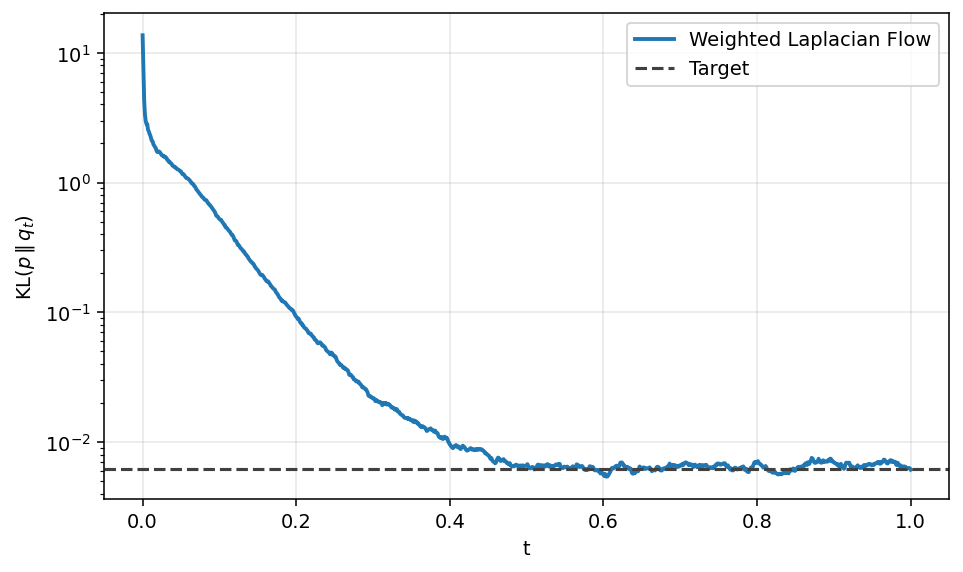}}
  \caption{Bimodal target with left-mode initialization.}
  \label{fig:unimodal2bimodal}
\end{figure}

Starting entirely in the left mode, WLF splits the cloud and transports mass
across the barrier, matching the two target modes within a short time. The
method does not rely on stochastic barrier crossing. The velocity field
$v=\nabla\phi$ creates a deterministic channel between the modes, so mass is
moved through the low-density region quickly.


\subsection{Heavy-Tailed Cauchy Distribution}
\label{subsec:cauchy-experiments}
The second target is defined on the bounded, non-periodic box $[-15,15]^2$ with
homogeneous Neumann boundary condition. It is a product of standard Cauchy
densities,
\begin{equation*}
    p(x,y)\;\propto\;\frac{1}{1+x^2}\cdot\frac{1}{1+y^2},
\end{equation*}
truncated to the box and renormalized on the grid. This target is heavy-tailed,
so $\nabla\log\rho$ is weak far from the center.

We first initialize $q_0$ uniformly on the full box and select
$\Delta t=10^{-2}$ and $\alpha=1$. Snapshots for WLF are taken over
$t\in[0,5]$. The results are shown in Fig. \ref{fig:uniform2cauchy}. WLF moves particles toward the target contours as expected.


\begin{figure}
  \centering
  \subcaptionbox{Particle snapshots.}{\includegraphics[width=0.9\linewidth]{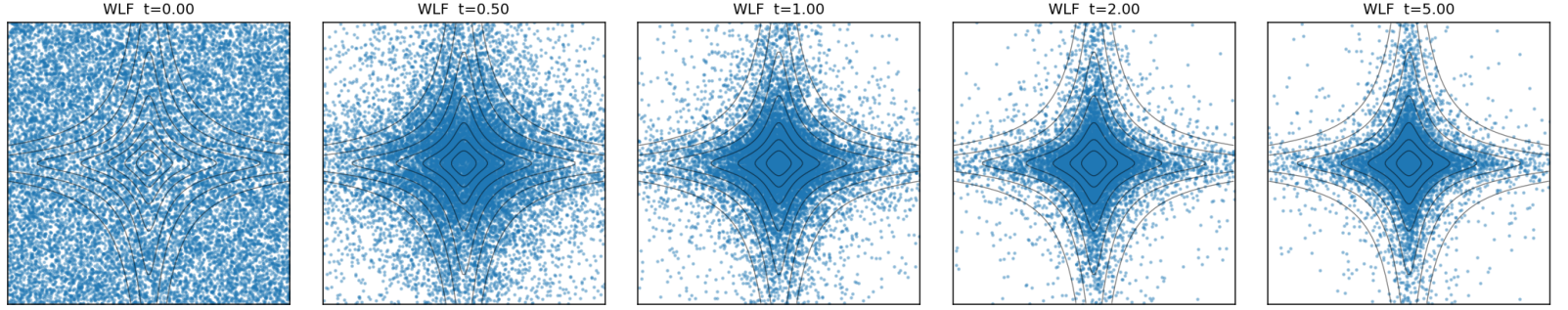}}
  \subcaptionbox{KL divergence.}{\includegraphics[width=0.4\linewidth]{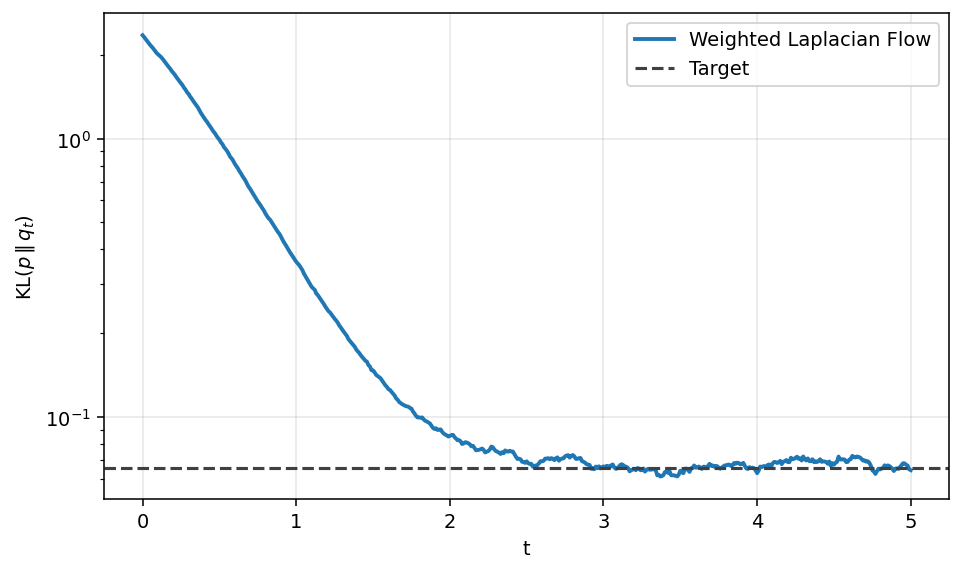}}
  \caption{Heavy-tailed target with uniform initialization.}
  \label{fig:uniform2cauchy}
\end{figure}

We also consider a more extreme situation and take $q_0$ to be uniform on a small rectangle of width $7.5$ placed in
the far corner, centered at $(10,10)$. Thus, all particles start in a low-density tail and
must be transported to the central peak and spread to the heavy shoulders. We
keep $\Delta t=10^{-2}$ and $\alpha=1$. The results are shown in Fig. \ref{fig:corner2cauchy}.

\begin{figure}
  \centering
  \subcaptionbox{Particle snapshots.}{\includegraphics[width=0.9\linewidth]{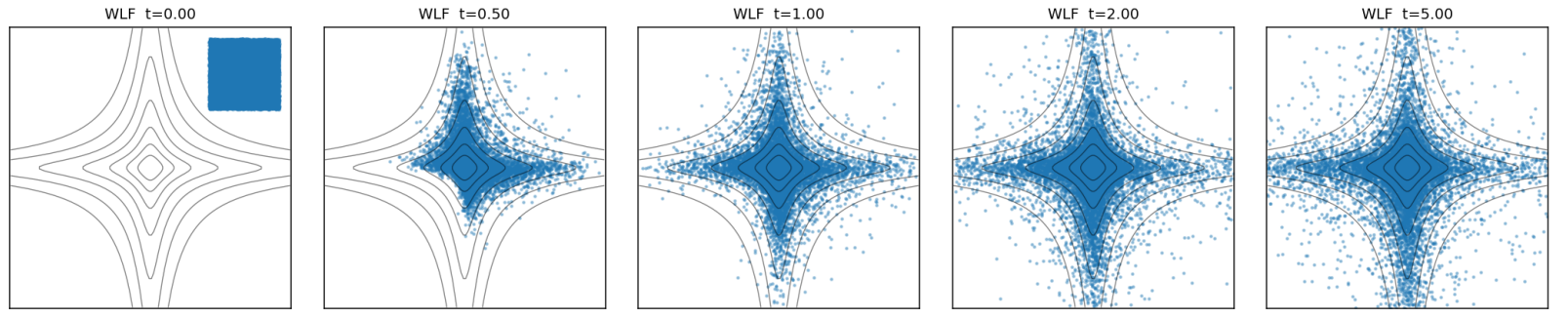}}
  \subcaptionbox{KL divergence.}{\includegraphics[width=0.4\linewidth]{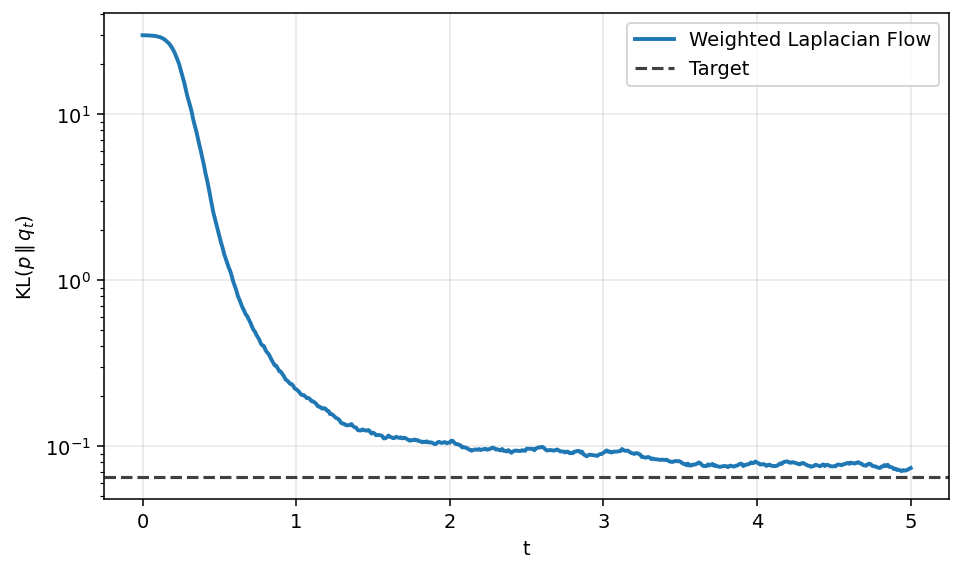}}
  \caption{Heavy-tailed target with corner initialization.}
  \label{fig:corner2cauchy}
\end{figure}

WLF is capable of handling long-range transport and heavy tails together. 
The particles are carried from the corner to the central region and then spread
along the slowly decaying Cauchy shoulders. 


\subsection{Ten-Dimensional Bimodal Distribution}
\label{subsec:10d-experiments}
We finally conduct a high dimensional test on the
ten-dimensional torus $[0,2\pi]^{10}$. We use $K=4$ Fourier modes in the
periodic network features and $M=2^{16}$ uniformly sampled collocation points in
each time step. The target is an equal mixture of two ten-dimensional product
von Mises distributions,
\begin{equation*}
 p(x)
 \propto
 \frac12\exp\left(\sum_{j=1}^{10}4\cos(x_j-\mu_{L,j})\right)
 +
 \frac12\exp\left(\sum_{j=1}^{10}4\cos(x_j-\mu_{R,j})\right),
\end{equation*}
where
\begin{equation*}
 \mu_L=(\tfrac{\pi}{2},\tfrac{3\pi}{2},\pi,\ldots,\pi),
 \qquad
 \mu_R=(\tfrac{3\pi}{2},\tfrac{\pi}{2},\pi,\ldots,\pi).
\end{equation*}
The two mixture components differ only in their first two coordinates and share
the remaining coordinates, so we visualize the first two-dimensional marginal
of the particle cloud. The KL divergence is also calculated on the first two dimensions.

The initial distribution $q_0$ is confined in the left component. We use $\Delta t=10^{-2}$ and $\alpha=1$, and show the WLF trajectory over
$t\in[0,10]$ in Fig. \ref{fig:10d-unimodel2bimodal}. WLF still works in this preliminary high-dimensional test. 



\begin{figure}
  \centering
  \subcaptionbox{Particle snapshots.}{\includegraphics[width=0.9\linewidth]{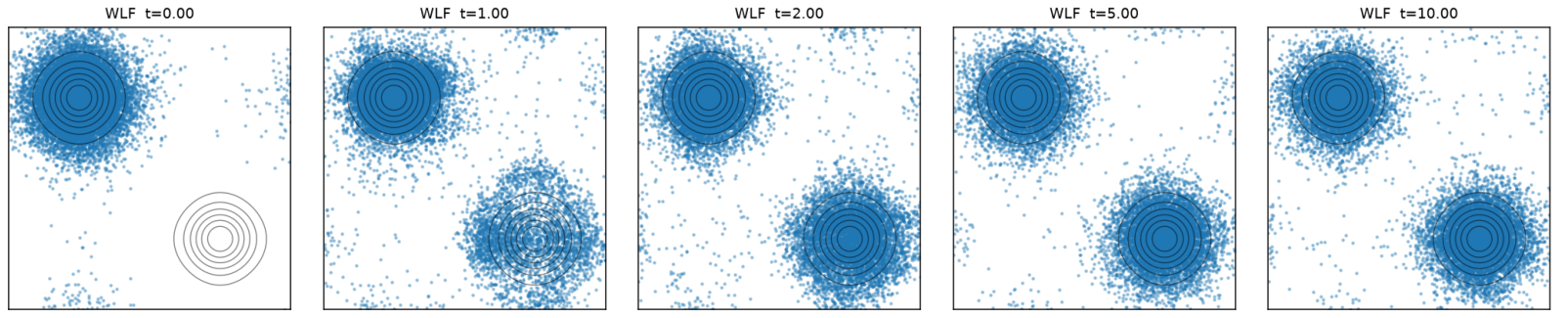}}
  \subcaptionbox{KL divergence.}{\includegraphics[width=0.4\linewidth]{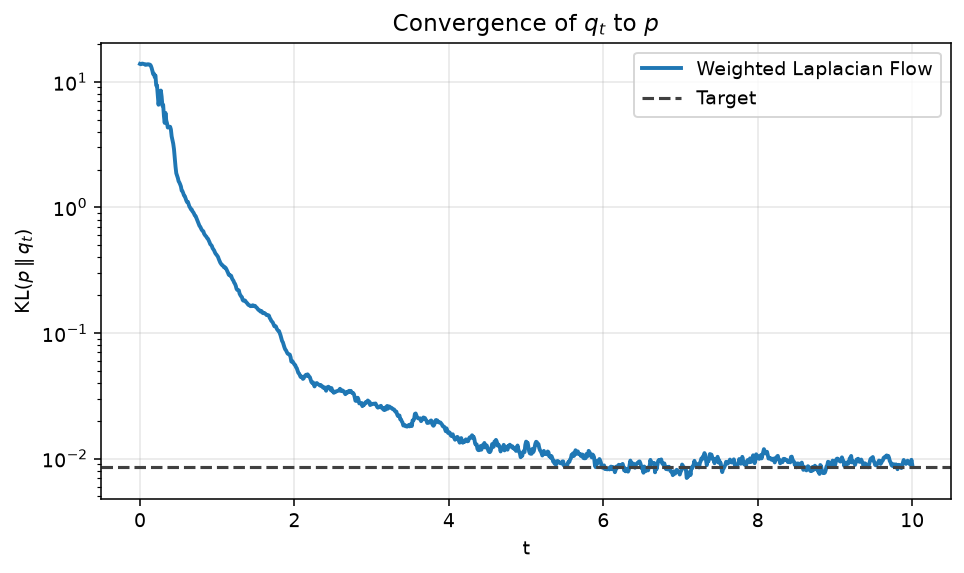}}
  \caption{Ten-dimensional bimodal target with left-mode initialization.}
  \label{fig:10d-unimodel2bimodal}
\end{figure}



\section{Conclusion}
\label{sec:conclusion}

We introduced weighted Laplacian flow (WLF), a deterministic particle-flow
framework for sampling from a target density known only up to normalization.
The dynamics are built from a centered relaxation of the logarithmic density
ratio and a target-weighted Poisson equation that converts this scalar
relaxation into a global velocity field. On bounded smooth domains with
homogeneous Neumann boundary conditions, we proved global well-posedness for
Lipschitz initial log-density ratios. The solution remains spatially Lipschitz
on every finite time interval, while a weighted $L^2(\rho)$ stability estimate
gives uniqueness and continuous dependence on the initial data.

The centered relaxation leads to an unusually explicit convergence theory.
Writing $p=\rho/\int_\Omega\rho$, the log-density-ratio oscillation satisfies
\[
    \operatorname{osc}\log\frac{p}{q_t}
    =
    e^{-t}
    \operatorname{osc}\log\frac{p}{q_0},
\]
which is equivalently exact contraction in the Hilbert projective metric.
Moreover,
\[
    \frac{d}{dt}\mathrm{KL}(q_t\,\|\,p)
    =
    -\mathrm{KL}(q_t\,\|\,p)
    -\mathrm{KL}(p\,\|\,q_t).
\]
Thus the forward relative entropy decays at least at the unit exponential
rate. These identities require neither log-concavity of the target nor a
spectral-gap estimate for the weighted Laplacian.

Beyond these dynamical identities, WLF admits a natural variational
interpretation. On the smooth manifold of positive densities, it is the formal
gradient flow of $\mathrm{KL}(p\,\|\,q)$ under a target-anchored divergence
metric. Along the flow,
\[
    \frac{d}{dt}\mathrm{KL}(p\,\|\,q_t)
    =
    -\operatorname{Var}_p\!\left(\log\frac{p}{q_t}\right).
\]
Equivalently, the variational principle first selects the steepest centered
compression of the generating entropy, and the weighted Poisson equation then
realizes that compression through the velocity field of minimum weighted
kinetic energy. Near equilibrium, the WLF metric reduces to the $L^2(p)$
geometry of relative density perturbations and the linearized equation is
simply $\partial_tu=-u$.

Several questions remain open. A natural next step is to develop the
target-anchored Hilbert structure into a global metric theory, including
geodesics, minimizing-movement schemes, and convexity properties of
$\mathrm{KL}(p\,\|\,q)$. Extending the PDE theory from bounded domains to
$\mathbb R^d$, particularly for heavy-tailed targets, is important for the
sampling applications that motivate the method. Another direction is to
establish quantitative convergence of finite-particle and numerical
approximations to the continuum flow. On the computational side, scalable
solution of the target-weighted Poisson problem remains the principal challenge
in high dimension; mesh-free elliptic solvers and discretization-error analysis
are therefore natural priorities.

\appendix

\section{Sampling Method}
\label{sec:algorithm}

We now summarize numerical realizations of WLF. Suppose
that samples $\{X_k^0\}_{k=1}^{N_p}$ from $q_0$ are available and that the
target is specified only through an unnormalized density $\rho$. We initialize
\[
    \omega^0=\log\rho-\log q_0.
\]
At every time level, $\bar\omega^n$ is treated as an additional unknown and is
determined together with the elliptic potential by the discrete Neumann
solvability condition. No normalized target density and no partition function
are used.

For an optional noise parameter $\alpha\ge0$, the particle update can be
augmented by
\[
    dX_t
    =
    \left(
        v+\alpha\nabla\log\rho-\alpha\nabla\omega
    \right)dt
    +
    \sqrt{2\alpha}\,dW_t.
\]
The deterministic WLF corresponds to $\alpha=0$.

\begin{algorithm}
\caption{Weighted Laplacian flow (grid-based)}
\label{alg:ours}
\begin{algorithmic}[1]
\REQUIRE Grid $V_h=\{s_j\}_{j=1}^M$, initial density $q_0$, particles
$\{X_k^0\}_{k=1}^{N_p}$, unnormalized target $\rho$, time step $\Delta t$,
noise parameter $\alpha\ge0$, and number of steps $N_t$
\ENSURE Particles $\{X_k^{N_t}\}_{k=1}^{N_p}$
\STATE Set
$\omega^0(s_j)=\log\rho(s_j)-\log q_0(s_j)$.
\STATE Assemble the discrete weighted Laplacian $A_\rho$ and a quadrature
weight vector $m$; cache a factorization of the augmented system when possible.
\FOR{$n=0$ to $N_t-1$}
    \STATE \textbf{Poisson step:} solve jointly for
    $(\phi^n,\bar\omega^n)$ from
    \[
    \begin{bmatrix}
        A_\rho & -\mathbf 1\\
        m^\top & 0
    \end{bmatrix}
    \begin{bmatrix}
        \phi^n\\
        \bar\omega^n
    \end{bmatrix}
    =
    \begin{bmatrix}
        -\omega^n\\
        0
    \end{bmatrix}.
    \]
    Here the last row fixes the additive constant of $\phi^n$, while
    compatibility of the first block determines $\bar\omega^n$.
    \STATE Set
    $F^n(s):=-(\omega^n(s)-\bar\omega^n)$ and
    $v^n(s):=\nabla\phi^n(s)$.
    \STATE \textbf{Transport step:} backtrace
    $\widetilde s_j^n=s_j-\Delta t\,v^n(s_j)$ and update
    \[
        \omega^{n+1}(s_j)
        =
        \omega^n(\widetilde s_j^n)
        +
        \Delta t\,F^n(\widetilde s_j^n).
    \]
    \STATE \textbf{Particle step:} interpolate $v^n$ and
    $\nabla\omega^n$ at $X_k^n$, draw
    $\xi_k^n\sim\mathcal N(0,I_d)$, and set
    \[
    \begin{aligned}
    X_k^{n+1}
    ={}&
    X_k^n+\Delta t\,v^n(X_k^n)\\
    &+\alpha\Delta t
    \left[
        \nabla\log\rho(X_k^n)-\nabla\omega^n(X_k^n)
    \right]
    +\sqrt{2\alpha\Delta t}\,\xi_k^n.
    \end{aligned}
    \]
\ENDFOR
\end{algorithmic}
\end{algorithm}

The augmented linear system is useful conceptually and computationally: the
unknown scalar $\bar\omega^n$ is obtained from solvability rather than from an
explicit normalized average. Multiplying all target weights $\rho(s_j)$ by the
same positive constant leaves the computed velocity unchanged.

\begin{algorithm}
\caption{Weighted Laplacian flow (mesh-free)}
\label{alg:ours-meshfree}
\begin{algorithmic}[1]
\REQUIRE Networks $\phi_\theta$ and $\omega_\eta$, initial density $q_0$,
particles $\{X_k^0\}_{k=1}^{N_p}$, unnormalized target $\rho$, number of
collocation points $M$, time step $\Delta t$, noise parameter $\alpha\ge0$,
and number of steps $N_t$
\ENSURE Particles $\{X_k^{N_t}\}_{k=1}^{N_p}$
\STATE Fit $\omega_{\eta^0}(z_i^{\mathrm{init}})$ to
$\log\rho(z_i^{\mathrm{init}})-\log q_0(z_i^{\mathrm{init}})$ on initial collocation points.
\FOR{$n=0$ to $N_t-1$}
    \STATE Draw collocation points $\{z_i^n\}_{i=1}^M$.
    \STATE \textbf{Poisson step:} solve jointly for the potential network
    $\phi_{\theta^n}$ and scalar $\bar\omega^n$ using the mixed weak system
    \[
    \int_\Omega
        \rho(x)\nabla\phi_{\theta^n}(x)\cdot\nabla\psi(x)\,dx
    =
    \int_\Omega
        \rho(x)(\omega_{\eta^n}(x)-\bar\omega^n)\psi(x)\,dx
    \quad\text{for all test functions }\psi,
    \]
    together with a normalization condition for $\phi_{\theta^n}$.
    In practice the integrals are replaced by Monte Carlo sums over the
    collocation points, and $\bar\omega^n$ is optimized as a scalar variable
    together with the network parameters.
    \STATE Set
    $F^n(x):=-(\omega_{\eta^n}(x)-\bar\omega^n)$.
    \STATE \textbf{Transport step:} backtrace
    $\widetilde z_i^n
    =
    z_i^n-\Delta t\,\nabla\phi_{\theta^n}(z_i^n)$,
    form
    \[
        y_i^n
        =
        \omega_{\eta^n}(\widetilde z_i^n)
        +
        \Delta t\,F^n(\widetilde z_i^n),
    \]
    and fit $\omega_{\eta^{n+1}}(z_i^n)$ to the targets $y_i^n$.
    \STATE \textbf{Particle step:} draw
    $\xi_k^n\sim\mathcal N(0,I_d)$ and set
    \[
    \begin{aligned}
    X_k^{n+1}
    ={}&
    X_k^n+\Delta t\,\nabla\phi_{\theta^n}(X_k^n)\\
    &+\alpha\Delta t
    \left[
        \nabla\log\rho(X_k^n)
        -
        \nabla\omega_{\eta^n}(X_k^n)
    \right]
    +\sqrt{2\alpha\Delta t}\,\xi_k^n.
    \end{aligned}
    \]
\ENDFOR
\end{algorithmic}
\end{algorithm}

For periodic domains, Fourier features can be used so that
$\phi_{\theta^n}$, $\omega_{\eta^n}$, and
$v^n=\nabla\phi_{\theta^n}$ are periodic by construction. The weak Poisson
formulation requires only the unnormalized weights $\rho(z_i^n)$ and treats
$\bar\omega^n$ as part of the solve, preserving the normalization-free
structure of the continuous model.

\bibliographystyle{plain}
\bibliography{reference}

\end{document}